\DeclareMathOperator{\diag}{diag}
\DeclareMathOperator{\trc}{tr}
\DeclareMathOperator{\rk}{rk}
\DeclareMathOperator{\End}{End}
\DeclareMathOperator{\im}{im}
\DeclareMathOperator{\spa}{span}
\DeclareMathOperator{\GL}{GL}
\DeclareMathOperator{\dom}{dom}
\DeclareMathOperator{\domh}{dom_{sa}}
\DeclareMathOperator{\real}{Re}
\DeclareMathOperator{\imag}{Im}
\DeclareMathOperator{\opm}{M}
\newcommand{\ve}{\varepsilon}
\newcommand{\N}{\mathbb{N}}
\newcommand{\R}{\mathbb{R}}
\newcommand{\C}{\mathbb{C}}
\newcommand{\bA}{\mathbf{A}}
\newcommand{\bC}{\mathbf{C}}
\newcommand{\cC}{\mathcal{C}}
\newcommand{\cK}{\mathcal{K}}
\newcommand{\cR}{\mathcal{R}}
\newcommand{\cS}{\mathcal{S}}
\newcommand{\cU}{\mathcal{U}}
\newcommand{\cV}{\mathcal{V}}
\newcommand{\bb}{\mathbf{b}}
\newcommand{\cc}{\mathbf{c}}
\newcommand{\uu}{\mathbf{u}}
\newcommand{\vv}{\mathbf{v}}
\newcommand{\mm}{\mathbf{m}}
\newcommand{\ww}{\mathbf{w}}
\newcommand{\rr}{\mathbbm r}
\newcommand{\qq}{\mathbbm q}
\newcommand{\rs}{\mathbbm s}
\newcommand{\kk}{\mathbbm k}
\newcommand{\mpe}{^{\operatorname{mp}}}
\newcommand{\ulx}{\boldsymbol{x}}
\newcommand{\uly}{\boldsymbol{y}}
\newcommand*{\mat}[1]{\opm_{#1}(\kk)}
\newcommand*{\herm}[1]{\opm_{#1}(\kk)_{\operatorname{sa}}}
\newcommand{\all}{\mathcal{M}^g}
\newcommand{\allh}{\mathcal{M}^g_{\operatorname{sa}}}
\newcommand{\Langle}{\mathop{<}\!}
\newcommand{\Rangle}{\!\mathop{>}}
\newcommand{\mx}{\Langle \ulx\Rangle}
\newcommand{\px}{\kk\!\Langle \ulx\Rangle}
\def\moverlay{\mathpalette\mov@rlay}
\def\mov@rlay#1#2{\leavevmode\vtop{
		\baselineskip\z@skip \lineskiplimit-\maxdimen
		\ialign{\hfil$#1##$\hfil\cr#2\crcr}}}
\newcommand{\re}{\cR_{\kk}(\ulx)}
\newcommand{\plangle}{\moverlay{(\cr<}}
\newcommand{\prangle}{\moverlay{)\cr>}}
\newcommand{\rx}{\kk\plangle \ulx \prangle}
\newcommand{\rxxc}{\C\plangle \ulx,\ulx^* \prangle}
\newcommand{\rxyc}{\C\plangle \ulx,\uly \prangle}
\def\cVsa{\cV^{\operatorname{sa}}}
\newcommand{\privileged}{free elliptic\xspace}
\newcommand{\Privileged}{Free elliptic\xspace}
\newcommand{\stably}{strongly\xspace}
\newtheorem{thm}{Theorem}[section]
\newtheorem{lem}[thm]{Lemma}
\newtheorem{cor}[thm]{Corollary}
\newtheorem{prop}[thm]{Proposition}
\newtheorem{thmA}{Theorem}
\theoremstyle{definition}
\newtheorem{defn}[thm]{Definition}
\newtheorem{exa}[thm]{Example}
\theoremstyle{remark}
\newtheorem{rem}[thm]{Remark}
\numberwithin{equation}{section}
\begin{document}
	
\setcounter{tocdepth}{3}
\contentsmargin{2.55em} 
\dottedcontents{section}[3.8em]{}{2.3em}{.4pc} 
\dottedcontents{subsection}[6.1em]{}{3.2em}{.4pc}
\dottedcontents{subsubsection}[8.4em]{}{4.1em}{.4pc}

\makeatletter
\newcommand{\mycontentsbox}{%

	{\centerline{NOT FOR PUBLICATION}
		\addtolength{\parskip}{-2.3pt}
		\tableofcontents}}
\def\enddoc@text{\ifx\@empty\@translators \else\@settranslators\fi
	\ifx\@empty\addresses \else\@setaddresses\fi
	\newpage\mycontentsbox\newpage\printindex}
\makeatother

\setcounter{page}{1}

\title[Regular and positive noncommutative rational functions]{Regular and positive\\[.2mm] noncommutative rational functions}

\author[I. Klep]{Igor Klep${}^1$}
\address{Igor Klep, Department of Mathematics, University of Auckland}
\email{igor.klep@auckland.ac.nz}
\thanks{${}^1$Supported by the Marsden Fund Council of the Royal Society of New Zealand. Partially supported by the Slovenian Research Agency grants P1-0222 and L1-6722.}

\author[J. E. Pascoe]{James Eldred Pascoe${}^2$}
\address{James Eldred Pascoe, Department of Mathematics, Washington University in St. Louis}
\email{pascoej@wustl.edu}
\thanks{${}^2$Supported by a National Science Foundation Mathematical Sciences Postdoctoral Research Fellowship Award No. DMS-1606260.}

\author[J. Vol\v{c}i\v{c}]{Jurij Vol\v{c}i\v{c}${}^3$}
\address{Jurij Vol\v{c}i\v{c}, Department of Mathematics, University of Auckland}
\email{jurij.volcic@auckland.ac.nz}
\thanks{${}^3$Supported by University of Auckland Doctoral Scholarship.}

\subjclass[2010]{Primary 13J30, 16K40, 47L07; Secondary 15A22, 26C15, 47A63}
\date{\today}
\keywords{Noncommutative rational function, linear pencil, realization theory, regular rational function, positive rational function, Hilbert's 17th problem}

\begin{abstract}
Call a noncommutative rational function $\rr$ regular if it has no singularities, i.e., $\rr(X)$ is defined for all tuples of self-adjoint matrices $X$. In this article regular noncommutative rational functions $\rr$ are characterized via the properties of their (minimal size) linear systems realizations $\rr=\bb^* L^{-1}\cc$. It is shown that $\rr$ is regular if and only if $L=A_0+\sum_jA_j x_j$ is \privileged. Roughly speaking, a linear pencil $L$ is \privileged if, after a finite sequence of basis changes and restrictions, the real part of $A_0$ is positive definite and the other $A_j$ are skew-adjoint. The second main result is a solution to a noncommutative version of Hilbert's 17th problem: a positive regular noncommutative rational function is a sum of squares.
\end{abstract}

\iffalse
%%%%%%%% TEXT-only abstract 
Call a noncommutative rational function $r$ regular if it has no singularities, i.e., $r(X)$ is defined for all tuples of self-adjoint matrices $X$. In this article regular noncommutative rational functions $r$ are characterized via the properties of their (minimal size) linear systems realizations $r=b^* L^{-1} c$. It is shown that $r$ is regular if and only if $L=A_0+\sum_jA_j x_j$ is \privileged. Roughly speaking, a linear pencil $L$ is \privileged if, after a finite sequence of basis changes and restrictions, the real part of $A_0$ is positive definite and the other $A_j$ are skew-adjoint. The second main result is a solution to a noncommutative version of Hilbert's 17th problem: a positive regular noncommutative rational function is a sum of squares.
\fi

\maketitle

%%%%%%%%%%%%%%%%%%%%%%%%%%%%%%%%%%%%%%%%%%%%%%%%%%%%%%%%%%%%%%%%%%%%%%%%%%%%
%%%%%%%%%%%%%%%%%%%%%%%%%%%%%%%%%%%%%%%%%%%%%%%%%%%%%%%%%%%%%%%%%%%%%%%%%%%%
%%%%%%%%%%%%%%%%%%%%%%%%%%%%%%%%%%%%%%%%%%%%%%%%%%%%%%%%%%%%%%%%%%%%%%%%%%%%

\section{Introduction}

Let $\kk$ be the field of real or complex numbers and $\ulx=(x_1,\dots,x_g)$ a tuple of freely noncommuting variables. By the theory of division rings \cite{Ami,Coh,Reu}, the free algebra $\px$ of noncommutative polynomials admits a universal skew field of fractions $\rx$, whose elements are called noncommutative rational functions. They are usually represented with syntactically valid expressions involving $x_1,\ldots,x_g, +, \cdot, (, ), {}^{-1}$ and elements from $\kk$. Noncommutative rational functions play a prominent role in a wide range of areas. In ring theory, they appear as quasideterminants of matrices over noncommutative rings \cite{GKLLRT} and in the context of rings satisfying rational identities \cite{Ber}. In theoretical computer science, recognizable series of weighted automata are precisely formal power series expansions of noncommutative rational functions \cite{BR}. For similar reasons they emerge as transfer functions of linear systems evolving along free semigroups in control theory \cite{BGM}. These linear systems techniques are also applied in free probability for computing asymptotic eigenvalue distributions of noncommutative rational function evaluations on random matrices \cite{BMS}. In free analysis they are noncommutative analogs of meromorphic functions and are endowed with the difference-differential calculus \cite{KVV2,AM}. Finally, ensembles of noncommutative rational functions are natural maps between noncommutative semialgebraic domains in free real algebraic geometry \cite{HMV,BPT}.

While the interest in noncommutative rational functions originated from the universal property of the free skew field $\rx$, their importance in aforementioned areas derives from properties of their matrix evaluations. Here one evaluates a given noncommutative rational function $\rr$ on tuples of self-adjoint matrices, which leads to the notion of the domain of $\rr$. Minimal factorizations \cite{KVV1} and the extent of matrix convexity \cite{HMV} of a noncommutative rational function $\rr$ are examples of problems that directly depend on knowing the domain of $\rr$. On a more applied side, understanding of the singularities of (non)commutative rational functions is also important in control theory, e.g. for stability questions \cite{GKVVW} or controllability and observability of linear systems \cite{WSCP}. In this paper we analyze two properties of noncommutative rational functions arising from their evaluations, namely regularity and positivity.

A noncommutative rational function $\rr$ is {\it regular} if its domain contains all tuples of like sized self-adjoint matrices. For example, $(1 + x_1^2)^{-1}$ and $(1+x_1^2x_2^2)^{-1}$ are regular functions\footnote{
If $S_1$ and $S_2$ are positive semidefinite matrices, then the eigenvalues of $S_1S_2$ are real and nonnegative, so $I+S_1S_2$ is invertible. 	
}, as well as are all noncommutative polynomials. In general, checking the regularity of $\rr$ is harder if the representatives of $\rr$ are complicated, e.g. if they contain numerous nested inverses. Furthermore, we note that, as in the commutative case, further difficulties arise because singularities of a given rational expression might be removable. The proper tool for investigating regularity comes from automata and control theory: every noncommutative rational function admits a {\it linear systems realization}
\begin{equation}\label{e:00}
\rr=\cc^* L^{-1}\bb,
\end{equation}
where $\bb,\cc\in\kk^d$ and $L$  is a {\it linear matrix pencil} of size $d$: $L=A_0+\sum_jA_jx_j$ with $A_j\in\mat{d}$. For the existence we refer to \cite[Sections 4.2 and 6.2]{Coh} or \cite{BGM} for the case $A_0=I$. Linear pencils give rise to linear matrix inequalities $L(x)\succeq0$ and are thus ubiquitous in optimization \cite{WSV}, systems engineering \cite{SIG} and in real algebraic geometry, see e.g. determinantal representations of polynomials \cite{Bra,NT}, the solution of the Lax conjecture \cite{HV}, and the solution of the Kadison-Singer paving conjecture \cite{MSS}. If the linear pencil $L$ is of minimal size satisfying \eqref{e:00}, then the ``no hidden singularities theorem'' \cite[Theorem 3.1]{KVV1} implies that $\rr$ is regular if and only if every evaluation of $L$ on a tuple of self-adjoint matrices is nonsingular. Characterization of regular functions thus turns into a problem of recognizing everywhere invertible pencils.

After describing regular functions we address their positivity. We say that a noncommutative rational function $\rr$ is {\it positive} if $\rr(X)$ is positive semidefinite for every tuple of self-adjoint matrices $X$ in the domain of $\rr$. For example, $x_1^{-2}$ and $x_2^2-x_2x_1(1+x_1^2)^{-1}x_1x_2$ are positive functions. We solve the analog of Hilbert's 17th problem for regular noncommutative rational functions.
The original solution by Artin, stating that a nonnegative commutative polynomial is a sum of squares of rational functions (see e.g. \cite{BCR,Mar,DAn}), has been extended to the noncommutative setting in various ways \cite{PS,Hel,McC}. For example, Helton \cite[Theorem 1.1]{Hel} showed that every positive noncommutative polynomial is a sum of hermitian squares $\sum_kq_k^*q_k$, where $q_k$ are noncommutative polynomials. More general results about noncommutative polynomials that are positive semidefinite on certain free semialgebraic sets are now commonly known as noncommutative Positivstellens\"atze \cite{HMP,Nel,HKM}; for positivity results on free analytic functions see e.g. \cite{PTD,BKV}.

\subsection{Main results and reader's guide}

In Section \ref{sec2} we characterize everywhere invertible pencils. For example, if $A_j$ are skew-adjoint matrices for $1\le j\le g$ and $A_0$ is a sum of a positive definite and a skew-adjoint matrix, then $\Lambda(X)=A_0\otimes I+\sum_{j>0} A_j\otimes X_j$ is clearly nonsingular for every tuple $X$ of self-adjoint matrices. The condition on the coefficients of $\Lambda$ can be also stated as
$$\real(\Lambda)=\real(\Lambda(0))\succ0,$$
where $\real Y=\frac12(Y+Y^*)$ denotes the real part of a matrix. More generally, we say that a linear pencil $L$ is {\it \privileged} if there exist constant matrices $D_1,\dots,D_\ell,V_1,\dots,V_{\ell-1}$ and $V_\ell=0$ of appropriate sizes such that
$$\real(D_kLV_1\cdots V_{k-1})=\real(D_kL(0)V_1\cdots V_{k-1})\succeq0$$
for $1\le k\le\ell$ and the columns of $V_k$ form a basis of $\ker \real(D_kL(0)V_1\cdots V_{k-1})$. See also Definition \ref{d:Dcond} for a recursive version. The pencil $\Lambda$ described previously is \privileged with $\ell=1$ and $D_1=I$. The name refers to elliptic systems of partial differential equations \cite{Mir,GB} and is justified in our main result on linear pencils.

\begin{thmA}\label{t:a}
A pencil $L$ is \privileged if and only if $L(X)$ is of full rank for every self-adjoint tuple $X$.
\end{thmA}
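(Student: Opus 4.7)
My approach is to handle the two implications separately, each by an induction that exploits the recursive structure of the \privileged condition.

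\emph{Forward direction: \privileged implies full rank.} Given witnesses $D_1,V_1,\dots,D_\ell, V_\ell=0$, I would suppose $L(X)u=0$ for some self-adjoint $n\times n$ tuple $X$ and a vector $u\in\C^{dn}$, and prove by induction on $k$ that $u\in\im(V_1\cdots V_k\otimes I_n)$; taking $k=\ell$ then yields $u=0$. The inductive step rests on two ingredients. First, the identity $\real(A\otimes X_j)=\real(A)\otimes X_j$ (valid because $X_j=X_j^*$), together with the constancy condition $\real(D_kLV_1\cdots V_{k-1})=\real(D_kL(0)V_1\cdots V_{k-1})$ from Definition \ref{d:Dcond}, yields $\real(D_kLV_1\cdots V_{k-1})(X)=\real(D_kL(0)V_1\cdots V_{k-1})\otimes I_n$. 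Second, for any matrix $P$ one has $\ker(P\otimes I_n)=\ker P\otimes\C^n$, which by construction equals $\im(V_k\otimes I_n)$ in the appropriate coordinates. Chasing these through $(D_kLV_1\cdots V_{k-1})(X)u_{k-1}=0$ advances $u_{k-1}\in\im(V_k\otimes I_n)$, hence $u\in\im(V_1\cdots V_k\otimes I_n)$.

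\emph{Reverse direction: full rank implies \privileged.} I would induct on $d$ and reduce to the following key lemma: \emph{if $M$ is a (possibly rectangular) linear pencil with $M(X)$ of full column rank for every self-adjoint tuple $X$, then there exists a nonzero matrix $D$ such that $\real(DM)=\real(DM(0))\succeq0$ and $\real(DM(0))\ne0$.} Granting this, set $D_1=D$ and take $V_1$ whose columns form a basis of $\ker\real(D_1L(0))$; then $V_1$ has strictly fewer columns than rows, while $LV_1$ still has full column rank on every self-adjoint evaluation because $L(X)$ is invertible and $V_1$ has full column rank. Iterating the key lemma on $LV_1,\,LV_1V_2,\dots$ produces a strictly decreasing chain of column-widths, so the process terminates with some $V_\ell=0$, yielding the desired \privileged decomposition.

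\emph{The main obstacle} is the key lemma. My plan is a convex-duality / Hahn--Banach argument in $\herm{d}$ with the trace pairing. Consider $\cC_0=\{D\in\mat{d}:\real(DA_j)=0 \text{ for } j\ge1\}$ and the real-linear map $\phi\colon\cC_0\to\herm{d}$, $D\mapsto\real(DA_0)$; the lemma demands that $\phi(\cC_0)$ meet the PSD cone outside $\{0\}$. Assuming the contrary and separating the subspace $\phi(\cC_0)$ from the PSD cone would produce $0\ne Y\succeq0$ together with Hermitian $Y_1,\dots,Y_g$ satisfying $A_0Y=\sum_{j\ge1}A_jY_j$. Factoring $Y=VV^*$ with $V$ of full column rank $r$ and passing to a GNS / truncated-moment-style quotient, one should promote the $Y_j$ to a self-adjoint $r\times r$ tuple $X$ and extract a nonzero $u\in\C^{dr}$ with $L(X)u=0$, contradicting the full rank hypothesis. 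This promotion of dual data $(Y,Y_j)$ to a genuine self-adjoint evaluation where $L(X)$ acquires a kernel is the technical heart of the argument and presumably occupies the bulk of Section \ref{sec2}.
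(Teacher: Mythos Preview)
Your forward direction is exactly the paper's argument (Theorem \ref{t:privpen2}, (i)$\Rightarrow$(ii)): peel off one $V_k$ at a time using that $\real(D_kL(X)V_1\cdots V_{k-1})$ is constant in $X$ and positive semidefinite.

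Your reverse direction has the same inductive skeleton as the paper's (reduce to the case \eqref{e:07} where no nontrivial $D$ exists), but your proposed proof of the key lemma is genuinely different---and in fact more elementary---than what the paper does. The paper does \emph{not} separate in $\herm{e}$; it separates in the space $\cVsa_2$ of self-adjoint $e\times e$ matrix polynomials of degree $\le2$, showing that the subspace $\cU=\mat{e\times d}L+L^*\mat{d\times e}$ meets the sums-of-squares cone $\cS_2$ only at $0$, and then runs a full GNS construction on $\cV_1$ to manufacture $X_j$ as operators on $\cV_0\cong\kk^{e^2}$. Your route bypasses the degree-two machinery entirely, and works once you sharpen two points. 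First, Klee's separation of the subspace $\phi(\cC_0)$ from the PSD cone (which meet only at $0$ by \eqref{e:07}) actually yields $Y\succ0$, not merely $0\ne Y\succeq0$: the separating functional is strictly positive on $\mathrm{PSD}\setminus\{0\}$, forcing positive definiteness. Second, with $Y$ invertible the ``promotion'' needs no GNS or moment quotient at all: set $X_j^T=-Y^{-1/2}Y_jY^{-1/2}$ (Hermitian since $Y,Y_j$ are) and take the kernel vector corresponding to $Y^{1/2}\in\mat{e}$; then $A_0Y^{1/2}+\sum_jA_jY^{1/2}X_j^T=(A_0Y-\sum_jA_jY_j)Y^{-1/2}=0$. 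This produces a witness $X$ of size $e$, improving on the paper's $e^2$ from the GNS construction. So your instinct that the promotion step is ``the technical heart'' is misplaced for your own argument---it is routine once $Y\succ0$---whereas in the paper's degree-two setup the GNS bookkeeping genuinely is where the work lies.
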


More precise statements involving size bounds are given in Proposition \ref{p:privpen1} and Theorem \ref{t:privpen2}. For square pencils $L$, $L(X)$ is always invertible if and only if the free locus of $L$, defined in \cite{KV}, does not contain any self-adjoint tuples. Theorem \ref{t:a} can be therefore seen as a 
weak real Nullstellensatz for linear pencils. In Section \ref{sec3} we apply Theorem \ref{t:a} to regular noncommutative rational functions via the realization theory. Among regular functions we also describe {\it \stably bounded functions} $\rr$, i.e., those for which there exist $\ve,M>0$ such that for every (not necessarily self-adjoint) tuple $X$ satisfying $\|X^*-X\|<\ve$ we have $\|\rr(X)\|\le M$.

\begin{thmA}\label{t:b}
Let $\rr\in\rx$. Then $\rr$ is regular if and only if $\rr=\cc^*L^{-1}\bb$ for some \privileged pencil $L$. Furthermore, $\rr$ is \stably bounded if and only if $\rr=\cc^*(A_0+\sum_jA_jx_j)^{-1}\bb$, where $\real A_0$ is positive definite and $A_j$ are skew-adjoint for $j>0$.
\end{thmA}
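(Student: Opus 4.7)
The plan is to reduce both equivalences to the pencil-level results of Section~\ref{sec2} via the theory of minimal linear realizations. The ``if'' directions are short computations, while the ``only if'' directions pass through a minimal realization and invoke the no-hidden-singularities theorem of Kaliuzhnyi-Verbovetskyi--Vinnikov to transfer questions about $\rr$ into analogous questions about its denominator pencil $L$.

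For the first equivalence, the backward implication is immediate: if $\rr=\cc^*L^{-1}\bb$ with $L$ a \privileged pencil, then Theorem~\ref{t:a} forces $L(X)$ to be invertible on every self-adjoint tuple $X$, so $\cc^*L(X)^{-1}\bb$ is defined on the entire self-adjoint locus and $\rr$ is regular. For the forward direction I would fix a minimal realization $\rr=\cc^*L^{-1}\bb$; the no-hidden-singularities theorem guarantees that the singularities of such an $L$ coincide with those of $\rr$, so regularity of $\rr$ becomes nonsingularity of $L$ on every self-adjoint tuple, and Theorem~\ref{t:a} identifies $L$ as \privileged.

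For the \stably bounded equivalence, the backward implication is a direct expansion. Writing $L(X)=A_0\otimes I+\sum_j A_j\otimes X_j$ for an arbitrary (not necessarily self-adjoint) tuple $X$, skew-adjointness of the $A_j$ with $j>0$ yields $\real L(X)=\real A_0\otimes I+\tfrac12\sum_j A_j\otimes(X_j-X_j^*)$; when $\|X-X^*\|$ is small enough this stays bounded below by some $cI\succ0$, and the elementary inequality $\|T^{-1}\|\le c^{-1}$ whenever $\real T\succeq cI$ then gives a uniform bound on $\|L(X)^{-1}\|$ and hence on $\|\rr(X)\|$. For the forward direction I again take the minimal realization, which is \privileged by the first equivalence. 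Strong boundedness supplies an extra constraint beyond regularity: $L(X)$ must remain invertible on an entire neighborhood of the self-adjoint locus, not merely on the locus itself. I expect this to correspond exactly to the $\ell=1$ case in the definition of \privileged---equivalently, to the existence of a matrix $D$ with $\real(DL)=\real(DL(0))\succ0$. Since positive-definiteness of $\real(DA_0)$ automatically forces $DA_0$, and hence $D$, to be invertible, passing to the equivalent realization $\rr=(D^{-*}\cc)^*(DL)^{-1}(D\bb)$ produces a realization of the required form.

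The main obstacle is this last step: arguing that strong boundedness of $\cc^*L^{-1}\bb$ on a complex neighborhood of the self-adjoint locus, combined with minimality of the realization, forces the refined $\ell=1$ structure on $L$ rather than merely the general \privileged structure. Once this pencil-level statement is established (as part of the analysis in Section~\ref{sec2}), the remaining ingredients---existence of minimal realizations, the no-hidden-singularities theorem, and the real-part computation above---amount to routine bookkeeping.
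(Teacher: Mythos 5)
Your argument for the first equivalence (regular iff $L$ \privileged) is exactly the paper's proof: apply \cite[Theorem 3.1]{KVV1} to a minimal realization and invoke Theorem~\ref{t:a} (i.e.\ Theorem~\ref{t:privpen2}). Your backward direction for the \stably bounded case is also essentially the paper's computation in Lemma~\ref{l:stb1}.

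However, there is a genuine gap in your forward direction for strong boundedness, and you have mislocated it. You assert that ``strong boundedness supplies an extra constraint beyond regularity: $L(X)$ must remain invertible on an entire neighborhood of the self-adjoint locus'' and then treat the remaining passage from $\rr$ to $L$ as ``routine bookkeeping'' to be handled by a Section~\ref{sec2} pencil statement. But this transfer is precisely where the substance lies, and it is not a pencil-level fact. Strong boundedness of $\rr=\cc^*L^{-1}\bb$ does not a priori force boundedness of $L^{-1}$: in a non-minimal realization the vectors $\cc$ and $\bb$ can wash out blowup in $L(X)^{-1}$, so some use of minimality is unavoidable; and even with minimality the no-hidden-singularities theorem only controls where $L$ is \emph{singular}, not the \emph{norm} of $L(X)^{-1}$ near the self-adjoint locus. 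The paper closes this with Lemma~\ref{l:stb2}: one shows, using the difference-differential operators $\Delta_j$ and the block upper-triangular evaluation identity of \cite[Theorem 4.8]{KVV2}, that strong boundedness of $\rr$ propagates to $\Delta_j(\rr)(x,0)=\cc^*L^{-1}A_j\bb$ and $\Delta_j(\rr)(0,x)=\cc^*A_jL^{-1}\bb$, and then iterates; the observability and controllability conditions $\spa_{\kk}\{A^w\bb\}=\kk^d=\spa_{\kk}\{(A^*)^w\cc\}$ from minimality \cite[Proposition 2.1]{BR} are what let one conclude that \emph{every} entry of $L^{-1}$ is strongly bounded. Once that is in hand, Lemma~\ref{l:stb1} and Proposition~\ref{p:privpen1} deliver the \stably \privileged (equivalently $\ell=1$) structure, exactly as you anticipate. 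So the argument you sketch is on the right track, but the missing step is not ``analysis in Section~\ref{sec2}''; it is the realization-theoretic propagation argument, which requires a separate proof.
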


See Theorem \ref{t:privreal} for the proof. In Section \ref{sec4} we address positivity of noncommutative rational functions.  We prove the following analog of Helton's sum of squares theorem \cite{Hel} for regular noncommutative rational functions.

\begin{thmA}\label{t:c}
Let $\rr\in\rx$ be regular. Then $\rr(X)$ is positive semidefinite for every tuple $X$ of self-adjoint matrices if and only if $\rr$ is a sum of hermitian squares of regular functions in $\rx$.
\end{thmA}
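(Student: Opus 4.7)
The ``if'' direction is immediate: the regular rational functions form a $*$-subalgebra of $\rx$, since the property that the domain contains every self-adjoint matrix tuple is preserved under sums, products, and the involution. Hence any $\sum_k q_k^*q_k$ with regular $q_k$ is again regular and evaluates to a sum of positive semidefinite matrices on every self-adjoint tuple.

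For the ``only if'' direction, my plan is to combine the realization theory from Theorem \ref{t:b} with a Hahn--Banach/GNS separation argument in the spirit of Helton's polynomial sum-of-squares theorem. Fix a minimal \privileged realization $\rr=\cc^*L^{-1}\bb$ of size $d$ via Theorem \ref{t:b}. Since $\rr$ is positive, in particular $\rr=\rr^*$, so the realizations $(\bb,L,\cc)$ and $(\cc,L^*,\bb)$ are similar via a Hermitian Gramian $T$ with $L^*=TLT^{-1}$ and $\cc=T\bb$. The doubling $\hat L=\bigl(\begin{smallmatrix}0 & L^*\\ L & 0\end{smallmatrix}\bigr)$, $\hat\cc=\bigl(\begin{smallmatrix}\cc\\ \bb\end{smallmatrix}\bigr)$ is still \privileged by Theorem \ref{t:a}, is self-adjoint, and satisfies $2\rr=\hat\cc^*\hat L^{-1}\hat\cc$. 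Inside the finite-dimensional $\kk$-vector space $V\subset\rx$ spanned by the rational functions $\uu^*\hat L^{-1}\vv$ with $\uu,\vv\in\kk^{2d}$, I would consider the convex cone $\Sigma_V\subset V$ of elements which are finite sums of hermitian squares of regular rational functions; the target is $\rr\in\Sigma_V$.

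If $\rr\notin\Sigma_V$, the plan is: verify that $\Sigma_V$ is closed in $V$ (a finite-dimensional convex-geometric argument, using the \privileged structure to bound candidate squares via evaluations on suitably chosen self-adjoint matrix tuples); apply Hahn--Banach to obtain a $\kk$-linear functional $\varphi$ on $V$ with $\varphi(\Sigma_V)\subseteq[0,\infty)$ and $\varphi(\rr)<0$; and run a GNS-type construction realizing $\varphi$ as a vector state $\langle\,\cdot\,h,h\rangle$ on a Hilbert space $H$ for a $*$-representation in which the variables act as self-adjoint operators $X_1,\dots,X_g$, so that $\langle\rr(X)h,h\rangle=\varphi(\rr)<0$.

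The principal obstacle is the final descent: the positivity hypothesis on $\rr$ applies only to self-adjoint \emph{matrix} tuples, so the abstract operator-theoretic evaluation produced by GNS must be approximated by matrix evaluations. I would handle this by truncating $H$ to approximately invariant finite-dimensional subspaces on which the compressed $X_j$ remain (approximately) self-adjoint, then use the \privileged structure of $\hat L$ to control the resulting error in $\rr(X)$. As guidance, in the strongly bounded setting of Theorem \ref{t:b} one has $L+L^*=2\real A_0\succ 0$ constant; for a symmetric realization $\rr=\cc^*L^{-1}\cc$ this yields directly $2\rr=\cc^*L^{-1}(L+L^*)L^{-*}\cc=2\,f^*f$ with $f=(\real A_0)^{1/2}L^{-*}\cc$ a vector of regular rational functions, exhibiting $\rr$ as $\sum_k f_k^*f_k$. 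Extending this explicit identity to the general \privileged case by induction on the depth $\ell$ of Definition \ref{d:Dcond}---handling each basis-change-and-restriction step in turn---would be the constructive alternative to the Hahn--Banach route.
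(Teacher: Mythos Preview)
Your overall strategy (Hahn--Banach separation plus a GNS construction) is the same as the paper's, but the execution has a real gap, and it stems from a misdiagnosed obstacle.

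The ``principal obstacle'' you identify---descending from self-adjoint \emph{operators} to self-adjoint \emph{matrices}---does not arise if the GNS space is chosen correctly. The paper's key idea is to work not with the span $V$ of entries of $\hat L^{-1}$, but with the finite-dimensional space $\cV_k$ spanned by words of length $\le k$ in the sub-expressions of a regular \emph{expression} $r$ representing $\rr$ (such an $r$ exists by Corollary~\ref{c:expr}). This space has the multiplicative compatibility $x_j\cV_k\subseteq\cV_{k+1}$, so after the separating functional $\lambda$ induces an inner product on $\cV_{k+1}$, the operators $X_j:=\pi(x_j\,\cdot\,)$ on $\cV_k$ (with $\pi$ the orthogonal projection $\cV_{k+1}\to\cV_k$) are genuinely self-adjoint \emph{matrices}. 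No truncation or approximation is needed; the contradiction $\langle r(X)1,1\rangle<0$ is obtained directly on a matrix tuple.

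Your space $V$ lacks this structure: $x_j\cdot(\text{entry of }\hat L^{-1})$ is not in general a linear combination of entries of $\hat L^{-1}$, so there is no natural way to define $X_j$ as an endomorphism of $V$. Relatedly, your cone $\Sigma_V$---elements of $V$ that happen to be sums of squares of \emph{arbitrary} regular functions---is the intersection of $V$ with an infinite-dimensional cone, and its closedness is far from clear. The paper instead uses the intrinsic cone $\cS_{2k}=\{\sum_j\rs_j^*\rs_j:\rs_j\in\cV_k\}$, whose closedness follows from a standard Carath\'eodory argument (Proposition~\ref{p:closed}).

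One further technical point the paper handles and you do not: the GNS-produced tuple $X$ need not lie in the domain of every sub-expression of $r$, so the paper uses Moore--Penrose evaluations $r\mpe(X)$ to make sense of $\langle r(X)1,1\rangle$ throughout the induction (Proposition~\ref{p:gns}); regularity of $r$ then gives $r\mpe(X)=\rr(X)$ at the end. Your constructive alternative via induction on the depth $\ell$ is an interesting direction, but note that your doubling $\hat L$ is Hermitian yet never has $\real\hat L(0)\succ0$ (its spectrum is symmetric about $0$), so the base-case identity $2\rr=f^*f$ you wrote down does not apply to $\hat L$; making this route work would require a different reduction.
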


This statement is proved as Theorem \ref{t:nonneg} using a Hahn-Banach separation argument for a convex cone in a finite-dimensional vector space constructed from a noncommutative rational function. Lastly, we discuss the algorithmic perspective and present examples in Section \ref{sec5}.

\subsection*{Acknowledgments}

The authors thank Bill Helton for drawing the connection to elliptic systems of PDEs to their attention.

%%%%%%%%%%%%%%%%%%%%%%%%%%%%%%%%%%%%%%%%%%%%%
%%%%%%%%%%%%%%%%%%%%%%%%%%%%%%%%%%%%%%%%%%%%%

\section{Full rank pencils}\label{sec2}

In this section we prove that a linear pencil is \privileged if and only if it is of full rank on all self-adjoint tuples (Theorem \ref{t:privpen2}).

\subsection{Basic notation}
Throughout the paper let $\kk\in\{\R,\C\}$ and fix $g\in\N$. Let $\px=\kk\Langle x_1,\dots,x_g \Rangle$ be the free $\kk$-algebra of noncommutative polynomials in freely noncommuting variables $x_1,\dots,x_g$. We endow $\px$ with the involution $*$ satisfying $x_j^*=x_j$ and $\alpha^*=\bar{\alpha}$ for $\alpha\in\kk$.

Let $*$ be the involution on $\mat{n}$ given by the transpose (if $\kk=\R$) or the conjugate transpose (if $\kk=\C$) and let $\herm{n}\subseteq\mat{n}$ be the $\R$-subspace of self-adjoint matrices. The notation $A\succ0$ or $A\succeq0$ for $A\in \herm{n}$ means that $A$ is positive definite or positive semidefinite, respectively, while $\|\cdot\|$ always refers to the operator norm. Furthermore denote
$$\all=\bigcup_n \mat{n}^g,\qquad \allh=\bigcup_n \herm{n}^g$$
and
$$\real X=\frac{1}{2}(X+X^*),\qquad \imag X=
\left\{
\begin{array}{lr}
\frac12(X-X^*) & \ \text{if }\ \kk=\R \\[1mm]
\frac{1}{2i}(X-X^*) & \ \text{if }\ \kk=\C 
\end{array}\right.
$$
for $X\in\all$.

\subsubsection{Linear matrix pencils}

If $A_0,\dots,A_g\in \mat{d\times e}$, then
\begin{equation}\label{eq:pencil}
L=A_0+\sum_{j=1}^g A_jx_j\in \mat{d\times e}\otimes \px
\end{equation}
is a (rectangular) {\bf pencil} of size $d\times e$. It can be naturally evaluated on $\all$ as
$$L(X)=A_0\otimes I+\sum_{j=1}^g A_j\otimes X_j\in \mat{dn\times en}$$
for $X\in\mat{n}^g$.

\subsection{\Privileged pencils}

\begin{defn}\label{d:Dcond}
Let $d\ge e$ and $L=A_0+\sum_jA_jx_j$ with $A_j\in \mat{d\times e}$.
\begin{enumerate}[label={\rm(\arabic*)}]
	\item $L$ is {\bf \stably \privileged} if there exists $D\in \mat{e\times d}$ such that
	$$\real (DA_0)\succ0,\qquad \real(DA_j)=0 \quad\text{for } j>0.$$
	\item With respect to $e$ we recursively define $L$ to be {\bf \privileged} if
	\begin{enumerate}
		\item it is \stably \privileged; or
		\item there exists $D\in \mat{e\times d}$ such that
		$$0\neq\real(DA_0)\succeq0,\qquad \real(DA_j)=0 \quad\text{for } j>0$$
		and $LV$ is \privileged, where columns of $V$ form a basis for $\ker \real(DA_0)$. Note that $LV$ is a pencil of size $d\times e'$ with $e'<e$.
	\end{enumerate}
\end{enumerate}
A pencil of size $d\times e$ with $d<e$ is (\stably) \privileged if and only if $L^*$ is (\stably) \privileged.
\end{defn}

\begin{exa}
Let $\kk=\R$, $g=2$ and
$$L=\begin{pmatrix}
1 & x_1-x_2 & x_1-1 \\
x_2-x_1 & 1 & 1 \\
1-x_1 & -1 & 0
\end{pmatrix}=A_0+A_1x_1+A_2x_2.$$
It is easy to check that every $3\times3$ matrix $D$ satisfying $\real(DA_1)=\real(DA_2)=0$ is a scalar multiple of $I$, so $L$ is not \stably \privileged. However, we have
$$\real(A_0)=\diag(1,1,0),\qquad \real(A_1)=\real(A_2)=0.$$
Restricting to the kernel of $\real(A_0)$ we obtain
$$V=\begin{pmatrix}
0 \\ 0 \\ 1
\end{pmatrix},\qquad
L'=LV=\begin{pmatrix}
x_1-1 \\ 1 \\ 0
\end{pmatrix}.$$
Choosing $D'=(0 \ 1 \ 0)$ we get $D'L'=1$, hence $L'$ is \stably \privileged and $L$ is \privileged.
\end{exa}

\begin{rem}
The terminology of Definition \ref{d:Dcond} refers to the ellipticity of partial differential equations  \cite{Nir,Mir,GB}. For example, a first order system
$$\sum_{j=1}^g P_j(x) \frac{\partial u}{\partial x_j}=f(x,u),$$
where $P_j$ are $d\times e$ matrices with $d\ge e$, is elliptic at the point $x$ if the matrix
$$P(x,\xi)=\sum_j^g P_j(x)\xi_j$$
has rank $e$ for all $\xi\in\R^g\setminus\{0\}$; see \cite[Section 4.7]{GB}. The analogy between free elliptic pencils and elliptic systems becomes clear in Theorem \ref{t:privpen2} where we prove that a pencil $L$ is \privileged if and only if $L(X)$ is of full rank for all $X\in\allh$.
\end{rem}

\begin{prop}\label{p:privpen1}
A pencil $L$ 
of size $d\times e$ with $d\geq e$
is \stably \privileged if and only if for some $\ve>0$,
\begin{equation}\label{e:20}
L(X)^*L(X)\succ\ve I \qquad \text{for all }\  X\in\allh.
\end{equation}
Furthermore, it suffices to test \eqref{e:20} for $X$ of size at most $(g+1)e^2$. 
\end{prop}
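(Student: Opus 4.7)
The forward direction is a direct computation. Suppose $L$ is \stably \privileged with witness $D \in \mat{e \times d}$ satisfying $\real(DA_0) \succeq \lambda I$ for some $\lambda > 0$ and $\real(DA_j) = 0$ for $j \geq 1$. For any $X \in \herm{n}^g$, self-adjointness of each $X_j$ makes the off-diagonal contributions collapse:
\[
\real\bigl((D\otimes I_n) L(X)\bigr) = \real(DA_0)\otimes I_n + \sum_{j=1}^g \real(DA_j)\otimes X_j = \real(DA_0)\otimes I_n \succeq \lambda I.
\]
Cauchy--Schwarz then gives $\lambda\|v\|^2 \leq \real\bigl(v^*(D\otimes I_n)L(X)v\bigr) \leq \|D\|\|v\|\|L(X)v\|$ for every $v$, so $L(X)^*L(X) \succeq (\lambda/\|D\|)^2 I$; this is \eqref{e:20} with $\ve := (\lambda/\|D\|)^2$.

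For the reverse direction, I observe that $L$ is \stably \privileged precisely when the real subspace
\[
\cL := \bigl\{\real(DA_0) : D \in \mat{e \times d},\ \real(DA_j) = 0 \text{ for } j \geq 1\bigr\} \subseteq \herm{e}
\]
contains a positive definite matrix. Assuming it does not, $\cL$ misses the open PSD cone, so Hahn--Banach separation produces $0 \neq M \in \herm{e}$ with $M \succeq 0$ and $\trc(MP) = 0$ for every $P \in \cL$. I then translate this, via the nondegenerate $\R$-bilinear pairing $(Y,Z) \mapsto \real\trc(YZ)$ on $\mat{e \times d} \times \mat{d \times e}$ together with the corresponding double-annihilator identity, into the existence of $N_1, \ldots, N_g \in \herm{e}$ such that
\[
A_0 M = \sum_{j=1}^g A_j N_j. \qquad (\ast)
\]

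From $(\ast)$ I produce explicit self-adjoint tuples of size $e$ that violate \eqref{e:20}. For $\epsilon > 0$ let $M_\epsilon := M + \epsilon I \succ 0$ and set $X_j^\epsilon := -\overline{M_\epsilon^{-1/2} N_j M_\epsilon^{-1/2}} \in \herm{e}$; the conjugation (trivial for $\kk = \R$) simultaneously ensures self-adjointness of $X_j^\epsilon$ and produces $(X_j^\epsilon)^T = -M_\epsilon^{-1/2} N_j M_\epsilon^{-1/2}$. Viewing $\Xi_\epsilon := M_\epsilon^{1/2}$ as a nonzero vector in $\kk^e \otimes \kk^e$ under the standard identification, the action of $L(X^\epsilon)$ on $\Xi_\epsilon$ is the $\mat{d \times e}$ matrix
\[
A_0 \Xi_\epsilon + \sum_{j=1}^g A_j \Xi_\epsilon (X_j^\epsilon)^T = \bigl(A_0 M_\epsilon - \textstyle\sum_j A_j N_j\bigr) M_\epsilon^{-1/2} = \epsilon A_0 M_\epsilon^{-1/2}
\]
by $(\ast)$. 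Its squared Frobenius norm is bounded by $\epsilon^2 \|A_0\|^2 \trc(M_\epsilon^{-1}) \leq \epsilon\, e\, \|A_0\|^2 \to 0$, while $\|\Xi_\epsilon\|_F^2 = \trc(M_\epsilon) \geq \trc M > 0$ stays bounded away from zero. Hence for any prescribed $\ve > 0$, choosing $\epsilon$ sufficiently small forces $\lambda_{\min}\bigl(L(X^\epsilon)^* L(X^\epsilon)\bigr) < \ve$, contradicting \eqref{e:20}; and the tuple $X^\epsilon \in \herm{e}^g$ has size $e \leq (g+1)e^2$.

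\textbf{Main obstacle.} The crux is the Hahn--Banach/annihilator step that extracts $(\ast)$; once that relation is in hand, the construction of $(X^\epsilon, \Xi_\epsilon)$ is motivated by the exact-cancellation case $M \succ 0$, where $\Xi := M^{1/2}$ and $X_j := -M^{-1/2} N_j M^{-1/2}$ give a precise kernel vector of $L$, with the $\epsilon$-regularization handling degenerate $M$. Keeping track of transpose versus adjoint in the complex case is a secondary but essential bookkeeping issue.
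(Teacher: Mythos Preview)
Your proof is correct, and the reverse direction is genuinely different from the paper's.

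For $(\Rightarrow)$ both arguments are essentially the same computation; your Cauchy--Schwarz phrasing is a bit more economical than the paper's explicit factorization $\real(DA_0)=R^*R$ followed by a chain of norm estimates, but there is no real difference in content.

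For $(\Leftarrow)$ the paper takes an entirely different route. After normalizing so that $A_0$ is an isometry, it invokes the matrix sum-of-squares theorem of McCullough (and McCullough--Putinar) to write $L^*L-\ve I=\ell^*\bA\ell$ with $\bA\succeq0$, then reads off from the block structure of $\bA$ a matrix $T$ with $(B_1\ \cdots\ B_g)=T(A_1\ \cdots\ A_g)$ and uses the Schur complement of $\bA$ to show $\|TA_0\|<1$; the witness is then $D=A_0^*-T$. The size bound $(g+1)e^2$ is exactly the bound from the SOS theorem applied to the $e\times e$ matrix polynomial $L^*L-\ve I$. Your argument instead separates the subspace $\cL$ from the open positive cone, runs a clean double-annihilator computation under the pairing $\real\trc(\cdot\,\cdot)$ to obtain $A_0M=\sum_jA_jN_j$ with $M\succeq0$ self-adjoint and $N_j$ self-adjoint, and then builds explicit near-kernel vectors of $L$ at self-adjoint points of size $e$. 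This is more elementary---it avoids the noncommutative SOS theorem entirely---and as a bonus it shows that size-$e$ test points already suffice, a strict improvement over the paper's $(g+1)e^2$. The paper's approach, on the other hand, produces the certifying $D$ directly, without passing through a contrapositive construction; that makes it marginally more constructive for the ``yes'' case.

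One cosmetic remark: in your $\epsilon$-construction the conjugation in $X_j^\epsilon=-\overline{M_\epsilon^{-1/2}N_jM_\epsilon^{-1/2}}$ is exactly what is needed to make the transpose come out right under the identification $(A\otimes B)\Xi = A\,\Xi\,B^T$, and you track this correctly; but it might be worth saying explicitly that for self-adjoint $H$ one has $\bar H^*=\bar H$ and $\bar H^T=H$, so the reader sees immediately both that $X_j^\epsilon$ is self-adjoint and that $(X_j^\epsilon)^T=-M_\epsilon^{-1/2}N_jM_\epsilon^{-1/2}$.
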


\begin{proof}
Let $L=A_0+\sum_jA_jx_j$ with $A_j\in \mat{d\times e}$ and $d\ge e$.

$(\Rightarrow)$ Let $D$ be a matrix with $\real (DA_0)=R^*R$ for $R\in\GL_d(\kk)$ and $\real(DA_j)=0$ for $j>0$. Denote $K=R^{-*}\imag (DL)R^{-1}$ and let $\kappa=1$ if $\kk=\R$ and $\kappa=i$ if $\kk=\C$. If $X\in \herm{n}^g$ and $\vv\in\kk^{en}$, then
\begin{align*}
\|D\|^2\langle L(X)\vv,L(X)\vv\rangle
&\ge \langle (DL)(X)\vv,(DL)(X)\vv\rangle\\
&=\langle (R^*R+\kappa R^*KR)(X)\vv,(R^*R+\kappa R^*KR)(X)\vv\rangle\\
&\ge \|R^{-*}\|^{-2}\langle (I+\kappa K)(X)(R\otimes I)\vv,(I+\kappa K)(X)(R\otimes I)\vv\rangle\\
&\ge \|R^{-*}\|^{-2}\langle (R\otimes I)\vv,(R\otimes I)\vv\rangle\\
&\ge \|R^{-*}\|^{-4} \langle \vv,\vv \rangle
\end{align*}
since $\kappa K$ is skew-adjoint on $\allh$. Hence we can take $\ve=\|R^{-1}\|^{-4}\|D\|^{-2}$.

$(\Leftarrow)$ Since $A_0$ is of full rank, there exists $R\in \GL_d(\kk)$ such that $RA_0$ is an isometry, i.e., $(RA_0)^*(RA_0)=I$. Since
$$\langle (RL)(X)\vv,(RL)(X)\vv\rangle\ge \|R^{-1}\|^{-2}\langle L(X)\vv,L(X)\vv\rangle$$
for every $X\in \herm{n}^g$ and $\vv\in\kk^{en}$, we have $(RL)(X)^*(RL)(X)\succ\ve\|R^{-1}\|^{-2} I$.

Without loss of generality we can thus assume that $A_0$ is an isometry. Also let
$$\cK=\left(\sum_{j>0}\im A_j\right)^{\perp}.$$
Because $L^*L-\ve I$ is a positive polynomial (on matrices of size at most $(g+1)e^2$), it is a sum of hermitian squares of matrix-valued polynomials of degree at most 1 by \cite[Theorem 0.2]{McC} or \cite[Theorem 1.1]{MP}:
$$L^*L-\ve I=\sum_{k=1}^N\left(C_{k,0}+\sum_j C_{k,j}x_j\right)^*\left(C_{k,0}+\sum_j C_{k,j}x_j\right),
\qquad C_{k,j}\in\mat{e}.$$
If $\bC=(C_{k,j})_{j,k}\in \mat{e}^{N\times(g+1)}$ and $\ell^*=(1,x_1,\dots,x_g)$, then
$$L^*L-\ve I=\ell^* \bC^*\bC \ell.$$
By looking at the coefficients of $L$ we conclude that the positive semidefinite matrix $\bA=\bC^*\bC$ is of the form
\begin{equation}\label{e:01}
\bA=\begin{pmatrix}
I-\ve I& B_1 & B_2 & \cdots & B_g \\
B_1^* & A_1^*A_1 & A_1^*A_2 &\cdots & A_1^*A_g\\
B_2^* & A_2^*A_1 & A_2^*A_2 &\cdots & A_2^*A_g\\
\vdots & \vdots & \vdots &\ddots & \vdots\\
B_g^* & A_g^*A_1& A_g^*A_2 &\cdots & A_g^*A_g
\end{pmatrix}
\end{equation}
where $B_j\in \mat{e}$ satisfy
\begin{equation}\label{e:02}B_j+B_j^*=A_0^*A_j+A_j^*A_0.\end{equation}
Let $\vv\in \kk^{gd}$ be arbitrary and set $\ww=0\oplus \vv\in\kk^{(g+1)d}$. If $\vv$ satisfies
$$\begin{pmatrix}A_1 & \cdots & A_g\end{pmatrix}\vv=0,$$
then $\ww^*\bA\ww=0$ and therefore $\bA\ww=0$, so
$$\begin{pmatrix}B_1 & \cdots & B_g\end{pmatrix}\vv=0.$$
Hence the rows of a block matrix $(B_1 \ \cdots \ B_g)$ lie in the linear span of the rows of $(A_1 \ \cdots \ A_g)$, so there exists $T\in \mat{e\times d}$ such that
\begin{equation}\label{e:03}
\begin{pmatrix}B_1 & \cdots & B_g\end{pmatrix}=T \begin{pmatrix}A_1 & \cdots & A_g\end{pmatrix},\qquad T\cK=0.
\end{equation}
Since $L(0)^*L(0)-\ve I\succ0$ and $\bA\succeq0$, the Schur complement of $I-\ve I$ in $\bA$
\begin{equation}\label{e:04}
\begin{pmatrix}A_1^* \\ \vdots \\ A_g^*\end{pmatrix} \begin{pmatrix}A_1 & \cdots & A_g\end{pmatrix}-
(1-\ve)^{-1}\begin{pmatrix}B_1^* \\ \vdots \\ B_g^*\end{pmatrix} \begin{pmatrix}B_1 & \cdots & B_g\end{pmatrix}
\end{equation}
is also positive semidefinite. Combining \eqref{e:03} and \eqref{e:04} yields
\begin{equation}\label{e:05}I-(1-\ve)^{-1}T^*T\succeq0.\end{equation}
Let $D=A_0^*-T$. Now \eqref{e:02} and \eqref{e:03} imply $\real(DA_j)=0$ for $j>0$, while \eqref{e:05} together with $A_0^*A_0=I$ yields
$$0\preceq A_0^*\left((1-\ve)I-T^*T\right)A_0=2\real(DA_0)-(\ve I+A_0^*D^*DA_0),$$
and therefore $\real(DA_0)\succ0$.
\end{proof}

\begin{lem}\label{l:small}
Let $\uu_1,\dots,\uu_n\in\kk^d$ and $\vv_1,\dots,\vv_n\in\kk^m$. If $(\ww^*\otimes I)(\sum_k\uu_k\otimes \vv_k)=0$ for all $\ww\in\kk^d$, then $\sum_k\uu_k\otimes \vv_k=0$.
\end{lem}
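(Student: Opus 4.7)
The claim is a routine linear algebra identification. Under the natural isomorphism $\kk^d \otimes \kk^m \cong \kk^{dm}$, the operator $\ww^*\otimes I$ acts on a simple tensor by
$$(\ww^*\otimes I)(\uu\otimes\vv)=(\ww^*\uu)\,\vv,$$
so extending linearly gives
$$(\ww^*\otimes I)\Big(\sum_k\uu_k\otimes\vv_k\Big)=\sum_k(\ww^*\uu_k)\vv_k\in\kk^m.$$
The plan is to test this identity against the standard basis of $\kk^d$ and then reassemble the resulting componentwise identities into the desired tensor identity.

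\textbf{Steps.} First, fix the standard basis $e_1,\dots,e_d$ of $\kk^d$, and for each $k$ write $\uu_k=\sum_{i=1}^d (\uu_k)_i\,e_i$ with scalars $(\uu_k)_i\in\kk$. Setting $\ww=e_i$ in the hypothesis produces, for every $i=1,\dots,d$, the vector identity
$$\sum_{k=1}^n (\uu_k)_i\,\vv_k=0\qquad\text{in }\kk^m.$$
Second, expand the tensor using the basis and bilinearity:
$$\sum_{k=1}^n\uu_k\otimes\vv_k=\sum_{k=1}^n\sum_{i=1}^d (\uu_k)_i\,e_i\otimes\vv_k=\sum_{i=1}^d e_i\otimes\Big(\sum_{k=1}^n (\uu_k)_i\,\vv_k\Big).$$
The inner sum vanishes by the previous step, so the whole expression is zero, as desired. (In the complex case one would choose $\ww=e_i$ and $\ww=ie_i$ to recover the scalar $(\uu_k)_i$ from the hypothesis, but since the identity $\sum_k(\ww^*\uu_k)\vv_k=0$ is assumed for all $\ww\in\kk^d$, testing only with $\ww=e_i$ already gives $\overline{(\uu_k)_i}$ in place of $(\uu_k)_i$, and the above argument still works verbatim after complex conjugation in $i$.)

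\textbf{Main obstacle.} There is essentially none; the only subtlety is the mild conjugate-linearity in $\ww$ when $\kk=\C$, but the hypothesis ``for all $\ww$'' absorbs it. The lemma is really the statement that the map $\kk^d\otimes\kk^m\to\mathrm{Hom}(\kk^d,\kk^m)$ induced by $\uu\otimes\vv\mapsto(\ww\mapsto(\ww^*\uu)\vv)$ is injective, which follows at once by evaluating on a basis.
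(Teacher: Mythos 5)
Your proof is correct. It is a slightly different (and equally elementary) route from the paper's: the paper begins by assuming without loss of generality that the $\vv_k$ are linearly independent, so that the hypothesis $\sum_k(\ww^*\uu_k)\vv_k=0$ immediately forces $\ww^*\uu_k=0$ for each $k$ and hence $\uu_k=0$; you instead expand the $\uu_k$ in the standard basis of $\kk^d$, test the hypothesis against $\ww=e_i$, and regroup the tensor by basis vectors. Both buy essentially the same thing in the same number of lines, with the paper reducing on the $\vv$-side and you decomposing on the $\uu$-side. One small correction to your parenthetical: testing with $\ww=e_i$ gives $e_i^*\uu_k=(\uu_k)_i$ directly, with no conjugate appearing, since the standard basis vectors have real entries — so the conjugation you guard against never actually arises, though your remark that it would be harmless is also correct.
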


\begin{proof}
Without loss of generality we can assume that $\vv_k$ are linearly independent. If
$$\sum_k(\ww^*\uu_k)\vv_k=0,$$
then $\ww^*\uu_k=0$ for all $k$. Since this holds for every $\ww$, we have $\uu_k=0$ and hence $\sum_k\uu_k\otimes \vv_k=0$.
\end{proof}

The proof of the following theorem applies a specialized GNS construction that is inspired by a more general and intricate version in the proof of the matricial real Nullstellensatz in \cite{Nel}.

\begin{thm}\label{t:privpen2}
For a pencil $L=A_0+\sum_jA_jx_j$ with $A_j\in \mat{d\times e}$ and $d\ge e$, the following are equivalent:
\begin{enumerate}[label={\rm(\roman*)}]
	\item $L$ is \privileged;
	\item $L$ is of full rank on $\allh$;
	\item $L(X)$ is of full rank for all $X\in\allh$ of size at most $(g+1)e^2$.
\end{enumerate}
\end{thm}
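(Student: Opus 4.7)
The plan is to close the cycle (i)$\Rightarrow$(ii)$\Rightarrow$(iii)$\Rightarrow$(i). Of these, (ii)$\Rightarrow$(iii) is tautological. For (i)$\Rightarrow$(ii), I induct on $e$: the strongly free elliptic base case is immediate from Proposition \ref{p:privpen1}. In the recursive case of Definition \ref{d:Dcond}(2)(b), take any $X\in\allh$ and $\vv$ with $L(X)\vv=0$; left-multiplying by $D$ and taking the hermitian part of $(DL)(X)=(DA_0)\otimes I+\sum_j(DA_j)\otimes X_j$ yields $(\real(DA_0)\otimes I)\vv=0$, since the other terms vanish by self-adjointness of the $X_j$ together with $\real(DA_j)=0$. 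This forces $\vv\in\im(V\otimes I)$; writing $\vv=(V\otimes I)\ww$, the inductive hypothesis applied to the smaller free elliptic pencil $LV$ yields $\ww=0$.

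The main content is (iii)$\Rightarrow$(i). I would induct on $e$, treating $e=0$ trivially. Consider the real subspace
$$W=\bigl\{\real(DA_0):D\in\mat{e\times d},\ \real(DA_j)=0\text{ for }j>0\bigr\}\subseteq\herm{e}.$$
The dichotomy is whether $W$ meets the positive semidefinite cone nontrivially. If $W$ contains a positive definite element, $L$ is strongly free elliptic by Definition \ref{d:Dcond}(1), hence free elliptic. If $W$ contains a nonzero positive semidefinite matrix $M$ with $\ker M\neq0$, let the columns of $V$ span $\ker M$; the pencil $LV$ has size $d\times e'$ with $e'<e$ and inherits full rank on self-adjoint tuples of size at most $(g+1)e^2\ge(g+1)(e')^2$, so by induction $LV$ is free elliptic, whence $L$ is free elliptic by Definition \ref{d:Dcond}(2)(b).

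The main obstacle is ruling out the remaining case $W\cap\{M\succeq0\}=\{0\}$. Here I would invoke a finite-dimensional Hahn-Banach separation (in $\herm{e}$ with the trace inner product) of the subspace $W$ from the positive semidefinite cone to produce $P\succ0$ in the annihilator $W^\perp$. Unpacking this annihilator condition, using that every linear functional on $\herm{e}$ comes from a self-adjoint matrix and that $\trc(H\real(DA))=\real\trc(DAH)$, yields $H_1,\ldots,H_g\in\herm{e}$ with $A_0P=\sum_j A_jH_j$. Cholesky-factoring $P=RR^*$ with $R\in\GL_e(\kk)$ and setting $\tilde X_j=R^{-1}H_jR^{-*}\in\herm{e}$ transforms this into $A_0R=\sum_j A_jR\tilde X_j$. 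Under the isomorphism $\kk^e\otimes\kk^e\cong\mat{e}(\kk)$ given by $u\otimes v\mapsto uv^T$, the evaluation $L(Y)$ for $Y\in\herm{e}^g$ of size $e$ acts on matrices via $M\mapsto A_0M+\sum_j A_jMY_j^T$. Choosing $Y_j=-\tilde X_j^T$ (which remains self-adjoint precisely because $\tilde X_j$ is) places the invertible matrix $R$ in $\ker L(Y)$, exhibiting a size-$e$ self-adjoint tuple (with $e\le(g+1)e^2$) at which $L$ drops rank, contradicting (iii). The principal technicality is the transpose-versus-adjoint bookkeeping in the complex case, which is exactly what forces the choice $Y_j=-\tilde X_j^T$ rather than the more obvious $-\tilde X_j$.
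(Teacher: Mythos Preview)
Your argument is correct. The implication (i)$\Rightarrow$(ii) matches the paper's proof essentially verbatim. The real difference is in (iii)$\Rightarrow$(i): the paper proceeds by a GNS-type construction, separating the sums-of-squares cone $\cS_2$ from the subspace $\cU=\mat{e\times d}L+L^*\mat{d\times e}$ inside the matrix polynomials of degree $\le 2$, obtaining a positive functional and thereby self-adjoint operators $X_j$ on $\cV_0=\mat{e}$ (so of size $e^2$) with $L(X)I=0$. Your route instead performs the separation one level down, directly in $\herm{e}$, separating the subspace $W=\{\real(DA_0):\real(DA_j)=0,\ j>0\}$ from the positive semidefinite cone. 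The dual witness $P\succ0$ unpacks via standard linear algebra to a matrix identity $A_0P=\sum_j A_jH_j$ with $H_j\in\herm{e}$, which after the Cholesky substitution $P=RR^*$ becomes precisely the statement that $L$ drops rank at the self-adjoint tuple $Y_j=-(R^{-1}H_jR^{-*})^T$ of size $e$. This is more elementary than the paper's GNS machinery and in fact yields a sharper bound: your argument shows that (iii) with the size bound $e$ (rather than $(g+1)e^2$) already suffices to imply (i). The paper's larger bound is an artifact of working in $\mat{e}$ rather than $\herm{e}$; the transpose bookkeeping you flag is exactly the price paid for descending to size $e$.
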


\begin{rem}
Square linear pencils that are nonsingular on whole $\all$ been characterized in \cite[Corollary 3.4]{KV}: if $L=A_0+\sum_jA_jx_j$, then $\det L(X)\neq0$ for all $X\in\all$ if and only if $A_0^{-1}A_1,\dots,A_0^{-1}A_g$ are jointly nilpotent matrices.
\end{rem}

\begin{rem}
In the opposite direction, square linear pencils $L$ with $\det L(X)=0$ for all $X\in\allh$ are precisely those that are not invertible as matrices over $\rx$. By \cite[Corollary 6.3.6]{Coh}, a linear pencil $A_0+\sum_jA_jx_j$ with $A_j\in\mat{d}$ is not invertible over $\rx$ if and only if there exist matrices $U,V\in\GL_d(\kk)$ such that for $0\le j\le g$ we have a block decomposition
$$UA_jV=\begin{pmatrix} * & 0 \\ * & * \end{pmatrix},$$
where the zero block is of size $d_1\times d_2$ with $d_1+d_2>d$. A linear bound on size of $X$ for testing $\det L(X)=0$ has been given in \cite{DM}.
\end{rem}

\begin{proof}[Proof of Theorem \ref{t:privpen2}]
We prove (i)$\Rightarrow$(ii) and (iii)$\Rightarrow$(i) by induction on $e$, while (ii)$\Rightarrow$(iii) is obvious.

(i)$\Rightarrow$(ii) By Proposition \ref{p:privpen1}, the claim holds if $L$ is \stably \privileged. If $L$ is \privileged but not \stably \privileged there exists $D\in \mat{e\times d}$ such that $0\neq\real (DA_0)\succeq0$, $\real(DA_j)=0$ for $j>0$ and $LV$ is \privileged, where columns of $V$ constitute a basis of $\ker \real (DA_0)$. Let $X\in \herm{n}^g$ and consider the decomposition
$$\ker L(X)\subseteq \kk^{en}=\kk^e\otimes\kk^n=((\ker \real (DA_0))^\perp\otimes\kk^n) \oplus (\ker \real (DA_0)\otimes\kk^n).$$
If $\begin{psmallmatrix}\uu \\ \vv\end{psmallmatrix}\in\ker L(X)$, then
$$\begin{pmatrix}\uu^* & \vv^*\end{pmatrix}\real(DL)(X)\begin{pmatrix}\uu \\ \vv\end{pmatrix}=0$$
and so $\uu^*(\real (DA_0)\otimes I)\uu^*=0$. Hence $\uu=0$ and $(LV)(X)\vv=0$, therefore the induction hypothesis implies $\vv=0$. Thus $L$ is of full rank on $\allh$.

(iii)$\Rightarrow$(i) Assume that $L$ is not \privileged. Therefore for every $D$ such that $0\neq\real (DA_0)\succeq0$ and $\real(DA_j)=0$ for $j>0$, $LV$ is not \privileged, where $V$ consists of a basis of $\ker\real(DA_0)$. By assumption there exists $X\in\allh$ such that $(LV)(X)$ is not of full rank, so $L(X)$ is not of full rank. Hence we need to consider the situation when $L$ satisfies
\begin{equation}\label{e:07}
\forall D\colon \quad \real (DA_0)\succeq0 \text{ and } \real(DA_j)=0 \text{ for } j>0 \ \Rightarrow\ \real(DA_0)=0. 
\end{equation}
For $k\in\{0,1,2\}$ define
$$\cV_k=\left\{p\in \mat{e}\otimes\px\colon \deg\le k\right\},\quad \cU=\mat{e\times d}L+L^*\mat{d\times e},$$
$$\cVsa_2=\left\{p\in\cV_2\colon p^*=p\right\},\quad \cS_2=\left\{\sum_jp_j^*p_j\colon p_j\in \cV_1\right\}.$$
Here $\cV_k$ and $\cU$ are $\kk$-linear spaces, while $\cVsa_2$ and $\cS_2$ are a $\R$-linear space and a convex cone, respectively. It is easy to verify that $\cS_2$ is closed in $\cV_2$ (see e.g.~the proofs of \cite[Proposition 3.4]{MP} or Proposition \ref{p:closed} below). Observe that $\eqref{e:07}$ implies $\cU\cap \cS_2=\{0\}$. Indeed, if $DL+L^*E^*\in \cS_2$, then $(DL+L^*E^*)+(EL+L^*D^*)\in \cS_2$ and so $\real ((D+E)A_0)\succeq0$ and $\real((D+E)A_j)=0$ for $j>0$. Hence \eqref{e:07} implies $\real ((D+E)A_0)=0$ and consequently $DL+L^*E^*=0$.

By \cite[Theorem 2.5]{Kle} there exists a $\R$-linear functional $\lambda:\cVsa_2+\cU\to \R$ such that $\lambda(\cS_2\setminus\{0\})\subseteq \R_{>0}$ and $\lambda(\cU)=\{0\}$, which we extend to a $\kk$-linear functional $\Lambda:\cV_2\to\kk$ by setting $\Lambda(p)\coloneq\lambda(\real p)+i\lambda(\imag p)$ if $\kk=\C$ and $\Lambda(p)\coloneq \lambda(\real p)$ if $\kk=\R$. Consequently we obtain a scalar product $\langle p_1,p_2\rangle\coloneq\Lambda(p_2^*p_1)$ on $\cV_1$. Note that
$$\langle a p,q\rangle=\Lambda(q^*ap)=\langle p,a^*q\rangle$$
for all $a\in\cV_0$ and $p,q\in\cV_1$.
Let $\pi:\cV_1\to\cV_0$ be the orthogonal projection. For every $a,b\in\cV_0$ and $1\le j\le g$ we have
$$\langle \pi(a x_j),b\rangle=\langle a x_j,b\rangle=\langle x_j,a^*b\rangle=\langle\pi(x_j),a^*b\rangle=\langle a\pi(x_j),b\rangle,$$
so
\begin{equation}\label{e:09}
\pi(ap)=a\pi(p) \qquad \forall a\in\cV_0,\ p\in\cV_1.
\end{equation}
Now define
\begin{alignat*}{3}
X_j&\colon\cV_0\to\cV_0, &\qquad & b \mapsto \pi(x_j b), \\
\ell_a&\colon\cV_0\to\cV_0, &\qquad & b \mapsto a b.
\end{alignat*}
It is easy to check that $X_j$ is a self-adjoint operator that commutes with $\ell_a$ by \eqref{e:09}. Let $D\in\mat{e\times d}$ be arbitrary and consider $I\in\mat{e}$ as a vector in $\cV_0$. Then $DL$ determines a linear operator $(DL)(X):\cV_0\to\cV_0$ and
$$\langle (DL)(X)I,b\rangle=\langle \pi(DL),b\rangle=\langle DL,b\rangle=\Lambda((b^*D)L)=0$$
for every $b\in \cV_0$ by \eqref{e:09} and the definition of $\Lambda$. Therefore $(DL)(X)I=0$ and consequently $L(X)I=0$ by Lemma \ref{l:small}.

Finally, the bound from the statement follows from Proposition \ref{p:privpen1} and the fact that $\dim\cV_0=e^2<(g+1)e^2$.
\end{proof}

\begin{rem}
Let $L=A_0+\sum_{j>0}A_jx_j$ be given and assume $D$ satisfies
\begin{equation}\label{e:sdp}
\real (DA_0)\succeq0,\qquad \real(DA_j)=0 \quad\text{for } j>0.
\end{equation}
If $\real (DA_0)\neq0$, then Theorem \ref{t:privpen2} implies that $L$ is \privileged if and only if $LV$ is \privileged, where $V$ comprises a basis of $\ker\real(DA_0)$. This fact simplifies the ellipticity testing: we can do the recursion with an arbitrary $D$ which non-trivially solves \eqref{e:sdp} in the sense that $\real (DA_0)\neq0$.
\end{rem}

%%%%%%%%%%%%%%%%%%%%%%%%%%%%%%%%%%%%%%%%%%%%%
%%%%%%%%%%%%%%%%%%%%%%%%%%%%%%%%%%%%%%%%%%%%%

\section{Regular rational functions}\label{sec3}

In this section we turn our attention to regular nc rational functions, i.e., those without singularities. 
The main result, Theorem \ref{t:privreal}, shows that
$\rr\in\rx$ is regular (\stably bounded) if and only if it admits a realization with a (\stably) \privileged pencil.

\subsection{Preliminaries}

We introduce noncommutative rational functions using matrix evaluations of formal rational expressions following \cite{HMV,KVV2}. Originally they were defined ring-theoretically, cf.~\cite{Ami,Coh}. {\bf Noncommutative (nc) rational expressions} are syntactically valid combinations of elements in $\kk$, freely noncommuting variables $\{x_1,\dots,x_g\}$, arithmetic operations $+,\cdot,{}^{-1}$ and parentheses $(,)$. For example, $(1+x_2^{-1}x_1)^{-1}+1$, $x_1+(-1)x_1$ and $0^{-1}$ are nc rational expressions. Their set is $\re$.

Given $r\in\re$ and $X\in \mat{n}^g$, the evaluation $r(X)$ is defined in the obvious way if all inverses appearing in $r$ exist at $X$. The set of all $X\in\all$ such that $r$ is defined at $X$ is is called the {\bf domain of $r$} and denoted $\dom r$. Note that $\dom r\cap\mat{n}^g\subseteq \mat{n}^g$ is a Zariski open set for every $n\in\N$ and therefore either empty or dense in $\mat{n}^g$ with respect to Euclidean topology. A nc rational expression $r$ is {\bf non-degenerate} if $\dom r\neq\emptyset$. On the set of all non-degenerate nc rational expressions we define an equivalence relation $r_1\sim r_2$ if and only if $r_1(X)=r_2(X)$ for all $X\in\dom r_1\cap \dom r_2$. The equivalence classes with respect to this relation are called {\bf noncommutative (nc) rational functions}. By \cite[Proposition 2.1]{KVV2} they form a skew field denoted $\rx$, which is the universal skew field of fractions of $\px$ by \cite[Section 4.5]{Coh}. The equivalence class of a nc rational expression $r\in\re$ is written as $\rr\in\rx$. The previously defined involution on $\px$ naturally extends to $\rx$.

We define the {\bf domain} of a nc rational function $\rr\in\rx$ as the union of $\dom r$ over all representatives $r\in\re$ of $\rr$. Lastly, let
$$\domh r=\dom r\cap\allh,\qquad \domh \rr=\dom \rr\cap\allh.$$
For a non-degenerate $r\in\re$ we have $\domh r\neq\emptyset$ if and only if $\dom r\neq\emptyset$, see e.g.~\cite[Remark 6.8]{Vol}.

\subsubsection{Realizations of nc rational functions}

For every $\rr\in\rx$ there exist $d\in\N$, $\bb,\cc\in\kk^d$ and a linear pencil $L$ of size $d$ such that
\begin{equation}\label{e:11}
\rr=\cc^* L^{-1}\bb,
\end{equation} cf.~\cite[Section 4.2]{Coh}. We say that \eqref{e:11} is a {\bf realization} of $\rr$ of size $d$; we refer to \cite{Coh,BR} for good expositions on classical realization theory.

Fix $\rr\in\rx$ and suppose $0\in\dom\rr$. In general, $\rr$ admits various realizations. A realization of $\rr$ whose size is smallest among all realizations of $\rr$ is called {\bf minimal}. These are unique up to basis change 
\cite[Theorem 2.4]{BR}, and if $\cc^*L^{-1}\bb$ is a minimal realization of $\rr$, 
then $\dom\rr\subseteq\left\{X\in \all\colon \det L(X)\neq0\right\}$, see \cite[Theorem 3.1]{KVV1}.

\subsection{Regularity}

\begin{defn}\label{d:funherm}
We say that $\rr\in\rx$ is:
\begin{enumerate}[label={\rm(\arabic*)}]
\item {\bf regular} if $\domh \rr= \allh$;
\item {\bf bounded} if there exists $M>0$ such that $\|\rr(X)\|\le M$ for all $X\in\domh \rr$.
\item {\bf \stably bounded} if there exist $\ve>0$ and $M>0$ such that $\| \rr(X)\|\le M$ for all $X\in\dom \rr$ with $\|\imag X\|<\ve$.
\end{enumerate}
Analogously, we say that $r\in\re$ is {\bf regular} if $\domh r= \allh$.
\end{defn}

This definition is naturally extended to matrices over $\rx$. Obviously a regular expression yields a regular function and (3) implies (2). Using Riemann’s removable singularities theorem \cite[Theorem 7.3.3]{Kra} it is not hard to deduce that (3) implies (1); however, this is also a consequence of Theorem \ref{t:privreal}.

\begin{exa}
Examples of regular but not bounded nc rational functions are nonconstant nc polynomials. An example of a bounded but not regular (and hence also not \stably bounded) function is $\rr=(1+x_1x_2^{-2}x_1)^{-1}$: indeed, we have $\|\rr(X_1,X_2)\|\le1$ for all $(X_1,X_2)\in\domh\rr$ and $(0,0)\notin \dom\rr$. On the other hand, $(\frac12+x_1^2+x_2^2+x_1^2x_2^2)^{-1}$ is an example of a \stably bounded rational function.
\end{exa}

\begin{prop}\label{p:expr}
Let $M$ be a square matrix over $\rx$ and assume each of its entries admits a regular rational expression. If $M(X)$ is nonsingular for every $X\in\allh$, then every entry of $M^{-1}$ admits a regular expression.
\end{prop}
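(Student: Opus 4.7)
The plan is to realize $M$ by a matrix-valued \privileged realization, enlarge it by a Schur-complement block so that $M^{-1}$ appears inside the inverse of a bigger pencil, and verify via Theorem \ref{t:a} that the enlarged pencil is again \privileged; the conclusion then follows from Theorem \ref{t:privreal}.

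First I would apply Theorem \ref{t:privreal} entrywise: since each $M_{ij}$ admits a regular expression, it is a regular nc rational function, and therefore admits a realization $M_{ij}=\cc_{ij}^*L_{ij}^{-1}\bb_{ij}$ with $L_{ij}$ a \privileged pencil of some size $d_{ij}$. Combining these in the standard way---take $L=\bigoplus_{i,j}L_{ij}$ of total size $D=\sum_{i,j}d_{ij}$ and assemble the column vectors $\cc_{ij},\bb_{ij}$ into scalar matrices $C,B\in\mat{D\times n}$ so that $M=C^*L^{-1}B$---yields a matrix-valued realization of $M$. A block-diagonal pencil has full rank on $\allh$ iff each block does, so Theorem \ref{t:a} gives that $L$ is \privileged.

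Next, consider the enlarged linear pencil
\[ \tilde L = \begin{pmatrix} L & -B \\ C^* & 0 \end{pmatrix} \]
of size $D+n$. For $X\in\allh$, the Schur complement of $L(X)$ in $\tilde L(X)$ is precisely $M(X)$, so $\tilde L(X)$ is invertible if and only if both $L(X)$ and $M(X)$ are, both of which hold by the previous paragraph and by the hypothesis, respectively. Thus $\tilde L$ is invertible on all of $\allh$, and Theorem \ref{t:a} upgrades this to $\tilde L$ being \privileged. The standard block-inversion formula identifies $M^{-1}$ with the bottom-right $n\times n$ block of $\tilde L^{-1}$; letting $\tilde\cc_i,\tilde\bb_j\in\kk^{D+n}$ denote the standard basis vectors $e_i,e_j\in\kk^n$ padded with zeros on top, we obtain a realization
\[ (M^{-1})_{ij}=\tilde\cc_i^*\tilde L^{-1}\tilde\bb_j \]
with \privileged pencil $\tilde L$. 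The converse direction of Theorem \ref{t:privreal} then supplies a regular expression for each entry of $M^{-1}$.

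The main technical point is the \privileged-ness of $\tilde L$. Unwinding Definition \ref{d:Dcond} recursively on the block pencil would be cumbersome; instead we route around the bookkeeping by passing through the characterization in Theorem \ref{t:a}, which reduces the claim to the elementary Schur-complement observation combining invertibility of $L$ (from the previous step) with invertibility of $M$ (from the hypothesis).
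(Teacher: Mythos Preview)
Your argument correctly establishes that each entry $(M^{-1})_{ij}$ is a \emph{regular function}, i.e., that $\domh (M^{-1})_{ij}=\allh$. But the proposition asks for more: that each entry admits a regular rational \emph{expression}. Theorem~\ref{t:privreal} only asserts the equivalence between regularity of a function and the existence of a realization with a \privileged pencil; it does not hand you a regular expression. A realization $\tilde\cc_i^*\tilde L^{-1}\tilde\bb_j$ is not itself a scalar rational expression, and to rewrite it as one you must produce scalar expressions for the entries of the matrix inverse $\tilde L^{-1}$. That step is exactly what Proposition~\ref{p:expr} is supposed to supply (applied to the pencil $\tilde L$ in place of $M$), so your argument is circular. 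Indeed, this is precisely why Corollary~\ref{c:expr} is derived \emph{from} Proposition~\ref{p:expr} and not the other way around.

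The obstacle is genuine: converting $\tilde L^{-1}$ into scalar expressions via Gaussian elimination or block inversion requires inverting sub-blocks of $\tilde L$, and there is no a~priori reason such sub-blocks are invertible on all of $\allh$. The paper's proof sidesteps this by induction on the size of $M$, passing to $M^*M$ so that the pivot $\mm^*\mm$ (with $\mm$ the first column) is automatically positive definite, hence everywhere invertible with a regular expression for its inverse; the Schur complement then has strictly smaller size, and one finishes with $M^{-1}=(M^*M)^{-1}M^*$.
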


\begin{proof}
We prove the statement by induction on the size $d$ of $M$. If $d=1$, then $M=\rr$ is an everywhere invertible regular rational function with a corresponding regular expression $r$; hence $M^{-1}$ is given by $r^{-1}$.
	
Assume the statement holds for matrices of size $d-1$ and let $\mm$ be the first column of $M$. Then $\mm(X)$ is of full rank for every $X\in\allh$, so the regular rational function $\mm^*\mm$ is everywhere invertible and its inverse is given by a regular expression. Hence the entries of the Schur complement of $\mm^*\mm$ in $M^*M$ admit regular rational expressions. Since $M^*M$ is nonsingular on $\allh$, the same holds for the Schur complement, which is a matrix of size $d-1$. By the induction hypothesis, the entries of the inverse of this Schur complement admit regular rational expressions, hence the same holds for the inverse of $M^*M$. Finally, the entries of $M^{-1}=(M^*M)^{-1}M^*$ admit regular rational expressions.
\end{proof}

\begin{cor}\label{c:expr}
If $\rr\in\rx$ is regular, then it arises from a regular rational expression.
\end{cor}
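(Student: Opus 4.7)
The plan is to combine Theorem \ref{t:privreal} (the realization part of Theorem B) with Proposition \ref{p:expr} applied to the realization pencil itself. Since the realization pencil is a square matrix of affine linear polynomials, and since \privileged pencils are of full rank on $\allh$ by Theorem \ref{t:privpen2}, the proposition delivers regular expressions for the entries of its inverse, which can then be assembled into a scalar regular expression for $\rr$.

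\smallskip

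First, I would invoke Theorem \ref{t:privreal} to obtain a realization
\[
\rr=\cc^*L^{-1}\bb
\]
with $\bb,\cc\in\kk^d$ and $L=A_0+\sum_j A_j x_j$ a \privileged pencil of size $d$. The pencil $L$ is a $d\times d$ matrix over $\px$, and each of its entries is an affine linear polynomial in $\ulx$. Since every noncommutative polynomial is everywhere defined on $\allh$, each entry of $L$ is trivially given by a regular rational expression.

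\smallskip

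Next, I would apply Theorem \ref{t:privpen2} to conclude that $L(X)$ is of full rank for every $X\in\allh$. Because $L$ is square, full rank is equivalent to invertibility, so $L(X)$ is nonsingular on all of $\allh$. The hypotheses of Proposition \ref{p:expr} are now satisfied for $M=L$, yielding regular rational expressions for all entries of $L^{-1}$.

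\smallskip

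Finally, writing
\[
\rr=\cc^*L^{-1}\bb=\sum_{i,j=1}^d \overline{c_i}\,(L^{-1})_{ij}\,b_j,
\]
I obtain $\rr$ as a fixed $\kk$-linear combination of entries of $L^{-1}$. Since the class of regular rational expressions is closed under $\kk$-linear combinations and products (the sum and product of two expressions defined on all of $\allh$ is itself defined on all of $\allh$), the right-hand side is a regular rational expression for $\rr$. No real obstacle arises beyond verifying these routine closure properties, as all substantive work has already been done in Theorem \ref{t:privreal}, Theorem \ref{t:privpen2}, and Proposition \ref{p:expr}.
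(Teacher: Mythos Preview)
Your proof is correct and follows essentially the same strategy as the paper: pass to a realization, argue that the pencil is invertible on $\allh$, and then apply Proposition \ref{p:expr} to the pencil. The only difference is that the paper establishes invertibility of the minimal realization pencil directly from \cite[Theorem 3.1]{KVV1}, whereas you take a small detour through Theorem \ref{t:privreal} and Theorem \ref{t:privpen2}; this is harmless (there is no circularity, since Theorem \ref{t:privreal} does not rely on Corollary \ref{c:expr}), just slightly less economical.
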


\begin{proof}
Let $\rr=\cc^* L^{-1}\bb$ be a minimal realization of $\rr$. Since $\rr$ is regular, $L$ is nonsingular on $\allh$ by \cite[Theorem 3.1]{KVV1}. Since $L$ is a matrix of polynomials, entries of $L^{-1}$ admit regular rational expressions by Proposition \ref{p:expr}. Hence $\rr$ admits a regular rational expression.
\end{proof}

\begin{lem}\label{l:stb1}
Let $L$ be a square linear pencil. The following are equivalent:
\begin{enumerate}[label={\rm(\roman*)}]
\item $L^{-1}$ is \stably bounded;
\item $L^{-1}$ is bounded;
\item There exists $\eta>0$ such that $L(X)^*L(X)\succ\eta^2 I$ for all $X\in\allh$.
\end{enumerate}
\end{lem}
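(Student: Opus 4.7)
The plan is to prove the cycle (i)$\Rightarrow$(ii)$\Rightarrow$(iii)$\Rightarrow$(i), all by elementary continuity and a Neumann series; no appeal to Theorem A or to any Positivstellensatz is needed. The first implication is a tautology: every $X \in \allh$ satisfies $\imag X = 0 < \ve$, so (i) with parameters $(\ve, M)$ specializes to (ii) with the same $M$.

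For (ii)$\Rightarrow$(iii), I would first check that $A_0 = L(0)$ is invertible. Pick any $Y \in \domh L^{-1}$; since $t \mapsto \det L(tY)$ is not identically zero, $tY \in \domh L^{-1}$ for all but finitely many $t \in \R$, and the bound $\|L(tY)\vv\| \ge M^{-1}\|\vv\|$ (equivalent to $\|L(tY)^{-1}\| \le M$) persists in the limit $t \to 0$, forcing $L(0)$ to be bounded below, hence invertible. Consequently $\domh L^{-1} \cap \herm{n}^g$ contains a Euclidean neighborhood of $0$ at every matrix size $n$, and is therefore Euclidean-dense in $\herm{n}^g$. By continuity of $L$, the bound $\|L(X)\vv\| \ge M^{-1}\|\vv\|$ extends to all $X \in \allh$, so $L(X)^*L(X) \succeq M^{-2}I$; any $\eta \in (0, M^{-1})$ then witnesses the strict form required in (iii).

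For (iii)$\Rightarrow$(i) I would use a uniform Neumann series perturbation. Condition (iii) gives $L(Y)$ invertible with $\|L(Y)^{-1}\| \le \eta^{-1}$ for every $Y \in \allh$. Set $C = \sum_{j=1}^g \|A_j\|$, $\ve = \eta/(2C)$, $M = 2\eta^{-1}$. For any tuple $X$ with $\|\imag X\| < \ve$, take $Y = \real X \in \allh$; since $X_j - Y_j$ equals $\imag X_j$ (if $\kk = \R$) or $i\,\imag X_j$ (if $\kk = \C$),
$$\|L(X)-L(Y)\| = \Bigl\|\sum_j A_j \otimes (X_j - Y_j)\Bigr\| \le C\|\imag X\| < \eta/2,$$
hence $\|L(Y)^{-1}(L(X)-L(Y))\| < 1/2$. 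The Neumann series for $I+L(Y)^{-1}(L(X)-L(Y))$ then converges, certifying that $L(X)$ is invertible with $\|L(X)^{-1}\| \le 2\eta^{-1} = M$. In particular $\{X \in \all : \|\imag X\| < \ve\} \subseteq \dom L^{-1}$, so $L^{-1}$ is strongly bounded with constants $\ve, M$.

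The key conceptual point is that (iii) furnishes a \emph{single} uniform bound on $\|L(Y)^{-1}\|$ across all of $\allh$, which is exactly what allows the global perturbation in (iii)$\Rightarrow$(i). The only mild subtlety is the density step in (ii)$\Rightarrow$(iii): one must rule out the possibility that $\domh L^{-1}$ fails to meet some $\herm{n}^g$, which is dispatched by the one-parameter line argument above.
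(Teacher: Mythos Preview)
Your proof is correct and follows essentially the same route as the paper: (i)$\Rightarrow$(ii) trivially, (ii)$\Rightarrow$(iii) by density plus continuity of $X\mapsto L(X)^*L(X)$, and (iii)$\Rightarrow$(i) by a uniform Neumann-series perturbation around $Y=\real X$ with the same choice $\ve=\eta/(2\sum_j\|A_j\|)$. The one genuine addition is that in (ii)$\Rightarrow$(iii) you insert the line argument $t\mapsto L(tY)$ to force $A_0=L(0)$ invertible, which upgrades the paper's ``dense for infinitely many $n$'' to ``dense at every size $n$''; this makes the density step self-contained rather than relying on the ambient fact that $\domh L^{-1}\cap\herm{n}^g$ is Zariski-open and nonempty for large $n$, and it also handles the $\succeq$ versus $\succ$ distinction cleanly.
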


\begin{proof}
(i)$\Rightarrow$(ii) Trivial.

(ii)$\Rightarrow$(iii). Assume that $\|L^{-1}(X)\|\le M$ for $X\in\domh L^{-1}$; this means that the largest eigenvalue of $(L^*L)^{-1}(X)$ is at most $M^2$, so the smallest eigenvalue of $(L^*L)(X)$ is at least $\frac{1}{M^2}$. Since $\domh L^{-1}\cap \herm{n}^g$ is dense in $\herm{n}^g$ for infinitely many $n\in\N$, we conclude that the smallest eigenvalue of $(L^*L)(X)$ is at least $\frac{1}{M^2}$ for every $X\in \allh$ and hence $L(X)^*L(X)\succ \frac{1}{M^2} I$ for all $X\in\allh$.

(iii)$\Rightarrow$(i) Let $L=A_0+\sum_jA_jx_j$ and assume $L(X)^*L(X)\succ\eta^2 I$ for all $X\in\allh$. Choose $\ve=\frac{\eta}{2}(\sum_{j>0}\|A_j\|)^{-1}$. If $X\in\dom L^{-1}$ and $\|\imag X\|<\ve$, then
$$\left\|L(\real X)^{-1}\sum_{j>0}A_j\otimes\imag X_j\right\|\le \frac{1}{\eta}\left(\sum_{j>0}\|A_j\|\right)\ve =\frac12,$$
so
$$L(X)=L(\real X)^{-1}\left(I+L(\real X)^{-1}\sum_{j>0}A_j\otimes\imag X_j \right)$$
is invertible and 
$\|L(X)^{-1}\|\le \frac{2}{\eta}$.
\end{proof}

\begin{lem}\label{l:stb2}
A nc rational function is \stably bounded if and only if the inverse of the pencil from its minimal realization is \stably bounded.
\end{lem}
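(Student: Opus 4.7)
\emph{Proof plan.}

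\textbf{The easy direction $(\Leftarrow)$.} If $L^{-1}$ is \stably bounded with constants $\ve,M_0$, I would use that by \cite[Theorem 3.1]{KVV1} the minimality of the realization forces $\dom\rr\subseteq\{X\colon\det L(X)\neq0\}=\dom L^{-1}$. Hence for every $X\in\dom\rr$ with $\|\imag X\|<\ve$, the bound
$$\|\rr(X)\|=\|\cc^*L(X)^{-1}\bb\|\le\|\cc\|\,\|\bb\|\,\|L(X)^{-1}\|\le\|\cc\|\,\|\bb\|\,M_0$$
gives \stable boundedness of $\rr$ with the same $\ve$.

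\textbf{The hard direction $(\Rightarrow)$.} Assume $\rr$ is \stably bounded. Since (3) of Definition \ref{d:funherm} implies (1) via Riemann's removable singularities theorem (noted after that definition), $\rr$ is regular; by \cite[Theorem 3.1]{KVV1} this forces $L(X)$ to be invertible for every $X\in\allh$, so $\rr(X)=\cc^*L(X)^{-1}\bb$ pointwise on $\allh$. By the (ii)$\Leftrightarrow$(iii) part of Lemma \ref{l:stb1}, to prove $L^{-1}$ is \stably bounded it is enough to produce a single constant $M$ with $\|L(X)^{-1}\|\le M$ for all $X\in\allh$.

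\textbf{Realizing entries of $L^{-1}$ as evaluations of $\rr$.} To bound $\|L(X)^{-1}\|$ uniformly I would exploit the minimality of the realization via the usual controllability/observability characterization: the vectors $(A_0^{-1}A_{j_1})\cdots(A_0^{-1}A_{j_k})A_0^{-1}\bb$ span $\kk^d$ and the dual vectors $\cc^*A_0^{-1}(A_{j_1}A_0^{-1})\cdots(A_{j_k}A_0^{-1})$ span $(\kk^d)^*$, and by Cayley--Hamilton one can take $k\le d-1$. Fix bases $\{u_i\}_{i=1}^d$ and $\{v_j^*\}_{j=1}^d$ of this form, and let $U,V\in\GL_d(\kk)$ be the matrices they assemble into. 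Then it suffices to bound each scalar $v_j^*L(X)^{-1}u_i$ uniformly, since $L(X)^{-1}=V^{-*}(V^*L(X)^{-1}U)U^{-1}$. The key mechanism is block-upper-triangular ampliation: for a tuple $Z=(Z_1,\dots,Z_g)$ of matrices and $X\in\herm{n}^g$, evaluating $\rr$ at the tuple with blocks $\begin{psmallmatrix}X_j & \delta_{jk}Z_k\\ 0 & X_j\end{psmallmatrix}$ produces the off-diagonal block $-\cc^*L(X)^{-1}A_kZ_kL(X)^{-1}\bb$, and iterating/nesting such ampliations gives access to arbitrary sandwiches $\cc^*A_0^{-1}g(AA_0^{-1})L(X)^{-1}f(A_0^{-1}A)A_0^{-1}\bb$ as entries of $\rr(Y)$ for suitably chosen $Y$ built from $X$ and the coefficient matrices of $f,g$. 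Since $\rr$ is bounded on $\allh$ (in particular on all such amplified $Y$), each scalar $v_j^*L(X)^{-1}u_i$ is uniformly bounded, yielding the desired uniform bound on $\|L(X)^{-1}\|$.

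\textbf{Main obstacle.} The delicate point is the ampliation step: one needs to construct, for every pair $(i,j)$, a tuple-valued polynomial map $Y_{ij}\colon\allh\to\allh$ so that the block-matrix evaluation of $\rr$ at $Y_{ij}(X)$ yields exactly $v_j^*L(X)^{-1}u_i$ as one of its entries, while keeping $Y_{ij}(X)$ self-adjoint (or at least with $\|\imag Y_{ij}(X)\|$ controllable) and of size controlled only by $d$. Symmetrising upper-triangular ampliations to keep the ampliated tuple self-adjoint is the standard fix, but care is required to preserve a finite uniform constant independent of $n=\dim X$; this is where using controllability/observability \emph{gramians} (rather than ad hoc basis choices) gives the cleanest bound.
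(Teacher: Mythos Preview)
Your approach is correct and is essentially the same mechanism as the paper's proof, just phrased differently. The paper packages the block-upper-triangular ampliation step into the difference-differential operators $\Delta_j$ of \cite{KVV2}: their Theorem~4.8 says precisely that evaluating $\rr$ at $\begin{psmallmatrix}X & W\\0 & X'\end{psmallmatrix}$ produces $\sum_j\Delta_j(\rr)(X,X')(W_j)$ in the off-diagonal block, and for the minimal realization with $L(0)=I$ one has $\Delta_j(\rr)(x,0)=\cc^*L^{-1}A_j\bb$ and $\Delta_j(\rr)(0,x)=\cc^*A_jL^{-1}\bb$. Iterating (each $\Delta_j(\cdot)(x,0)$ or $\Delta_j(\cdot)(0,x)$ preserves strong boundedness, by exactly the imaginary-part estimate you sketch) yields all sandwiches $\cc^*A_{w'}L^{-1}A_w\bb$ strongly bounded; controllability/observability then gives every entry of $L^{-1}$.

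Two small points. First, the ampliation you wrote down, with \emph{both} diagonal blocks equal to $X$, gives off-diagonal $\cc^*L(X)^{-1}A_kZ_kL(X)^{-1}\bb$; iterating this produces more copies of $L(X)^{-1}$, not the constant sandwiches $\cc^*A_{w'}L(X)^{-1}A_w\bb$ you correctly identify as the target. You need one diagonal block to be $0$ (using $L(0)=I$), exactly as the paper does with $X'=0$. Second, the detour through Riemann's removable singularities theorem to establish regularity is unnecessary: the paper never invokes regularity in this proof, working instead with $X\in\dom\rr$ (where $L(X)$ is automatically invertible) and passing to the closure by density at the end. Your reduction via Lemma~\ref{l:stb1} to boundedness on $\domh L^{-1}$ is fine, but you can reach it without the complex-analytic input.
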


\begin{proof}
The implication $(\Leftarrow)$ is clear, so consider $(\Rightarrow)$. For $1\le j\le g$ let
$$\Delta_j:\rx\to\rx\otimes\rx$$
be the difference-differential operator as in \cite[Section 4]{KVV2}; this is the noncommutative counterpart of both 
the partial finite difference and partial differential operator. If $X,X'\in\dom\rr\cap\mat{n}^g$ and $\Delta(\rr)(X,X')$ is interpreted as an element of $\End_{\kk}(\mat{n})\cong \mat{n}\otimes \mat{n}$, then
$$\begin{pmatrix}X& W\\ 0& X'\end{pmatrix}\in\dom\rr\cap\mat{2n}^g$$
for every $W\in M_n(\kk)^g$ and
$$\rr\begin{pmatrix}X& W\\ 0& X'\end{pmatrix}$$
is up to conjugation by a permutation matrix equal to
$$\begin{pmatrix}\rr(X)& \sum_j\Delta(\rr)(X,X')(W_j)\\ 0& \rr(X')\end{pmatrix}$$
by \cite[Theorem 4.8]{KVV2}. In particular, if $\rr$ is \stably bounded, then $(\Delta_j\rr)(x,0)$ and $(\Delta_j\rr)(0,x)$ are also \stably bounded nc rational functions. Indeed, suppose $\|\rr(X)\|\le M$ for all $X\in\dom \rr$ with $\|\imag X\|<\ve$. Then for every $X\in\dom\rr$ with $\|\imag X\|<\frac{\ve}{2}$ we have
$$\left\|\imag\begin{pmatrix}X & \ve I \\\ 0& 0\end{pmatrix}\right\|<\ve$$
and therefore
$$\left\|\begin{pmatrix}\rr(X) & \ve(\Delta_j\rr)(X,0) \\ 0& \rr(0)\end{pmatrix}\right\|\le M$$
by assumption. Consequently
$$\|(\Delta_j\rr)(X,0)\|\le \frac{M}{\ve}.$$
Since $\dom\rr$ is dense in $\dom (\Delta_j\rr)(x,0)$, we conclude that $(\Delta_j\rr)(x,0)$ is \stably bounded.

Now let $\rr\in\rx$ be a \stably bounded nc rational function with minimal realization $\rr=\cc^* L^{-1}\bb$ which can be chosen such that $L(0)=I$. By \cite[Example 4.7]{KVV2} we have
$$\Delta_j(\rr)(x,0)=\cc^*L^{-1}A_j\bb,\qquad \Delta_j(\rr)(0,x)=\cc^*A_jL^{-1}\bb.$$
Minimality of the realization implies
$$\spa_{\kk}\{A^w\bb\colon w\in\mx\}=\kk^d,\qquad \spa_{\kk}\{(A^*)^w\cc\colon w\in\mx\}=\kk^d$$
by \cite[Proposition 2.1]{BR}. Therefore we conclude that every entry of $L^{-1}$ is \stably bounded.
\end{proof}

\subsection{\Privileged realizations and regular rational functions}

\begin{thm}\label{t:privreal}
Let $\rr\in \rx$.
\begin{enumerate}[label={\rm(\arabic*)}]
\item $\rr$ is \stably bounded if and only if it admits a (minimal) realization with a \stably \privileged pencil.
\item $\rr$ is regular if and only if it admits a (minimal) realization with a \privileged pencil.
\end{enumerate}	
\end{thm}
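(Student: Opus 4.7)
Both statements follow by combining the characterizations of (stably) \privileged pencils from Section \ref{sec2} with the realization-theoretic facts recorded in Section \ref{sec3}. I will treat the ``minimal'' version of each statement; the non-minimal sufficiency is a byproduct.

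For part (1), the plan is to first address sufficiency. Suppose $\rr=\cc^*L^{-1}\bb$ for some (not necessarily minimal) \stably \privileged pencil $L$. By Proposition \ref{p:privpen1}, applied to the square pencil $L$, there exists $\ve>0$ with $L(X)^*L(X)\succ\ve I$ for every $X\in\allh$. Lemma \ref{l:stb1} then yields that $L^{-1}$ is \stably bounded, and hence so is the scalar entry $\cc^*L^{-1}\bb=\rr$, since $\|\rr(X)\|\le\|\bb\|\,\|\cc\|\,\|L(X)^{-1}\|$ uniformly in $X$. For necessity, start with a minimal realization $\rr=\cc^*L^{-1}\bb$ with $L(0)=I$. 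Lemma \ref{l:stb2} gives that $L^{-1}$ is itself \stably bounded, so Lemma \ref{l:stb1} produces $\eta>0$ with $L(X)^*L(X)\succ\eta^2 I$ on $\allh$. Applying Proposition \ref{p:privpen1} once more (now in the converse direction, to the square pencil $L$) concludes that $L$ is \stably \privileged.

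For part (2), I again split into the two directions. Sufficiency: if $\rr=\cc^*L^{-1}\bb$ with $L$ \privileged, then Theorem \ref{t:privpen2} implies $L(X)$ is of full rank, hence invertible (as $L$ is square), for every $X\in\allh$. Therefore $\rr(X)$ is defined on all of $\allh$, i.e.\ $\rr$ is regular. Necessity: fix a minimal realization $\rr=\cc^*L^{-1}\bb$ with $L(0)=I$. By \cite[Theorem 3.1]{KVV1}, the ``no hidden singularities'' theorem for minimal realizations, one has $\domh\rr\subseteq\{X\in\allh:\det L(X)\neq0\}$; since $\rr$ is regular, $\domh\rr=\allh$, so $L(X)$ is invertible (in particular of full rank) for every $X\in\allh$. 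Theorem \ref{t:privpen2} then yields that $L$ is \privileged.

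The entire argument is routine given the heavy lifting already done: Proposition \ref{p:privpen1} and Theorem \ref{t:privpen2} furnish the equivalence between the algebraic condition (being \stably \privileged, respectively \privileged) and the analytic full rank condition on $\allh$, while Lemmas \ref{l:stb1} and \ref{l:stb2} together with the minimal realization theory transport the norm and invertibility data between $\rr$ and its defining pencil. Consequently I do not anticipate any substantive obstacle; the only mild subtlety is to keep the quantifiers over representatives straight---sufficiency requires only \emph{some} (\stably) \privileged realization, while necessity produces one on the \emph{minimal} realization, which is the stronger statement parenthesized in the theorem.
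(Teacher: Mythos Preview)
Your proposal is correct and follows essentially the same route as the paper: both directions of (1) are obtained by chaining Proposition \ref{p:privpen1} with Lemmas \ref{l:stb1} and \ref{l:stb2}, and both directions of (2) combine Theorem \ref{t:privpen2} with \cite[Theorem 3.1]{KVV1}. Your write-up is slightly more explicit (e.g.\ spelling out the norm estimate $\|\rr(X)\|\le\|\bb\|\,\|\cc\|\,\|L(X)^{-1}\|$ and the inclusion $\domh\rr\subseteq\{X:\det L(X)\neq0\}$), but the logical structure is identical to the paper's.
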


\begin{proof}
(1) If $\rr$ is \stably bounded and $\cc^* L^{-1}\bb'$ is its minimal realization, then $L^{-1}$ is \stably bounded by Lemma \ref{l:stb2}. Hence $L$ is \stably \privileged by Lemma \ref{l:stb1} and Proposition \ref{p:privpen1}. Conversely, if $\rr$ admits a realization with a \stably \privileged pencil $L$, then $L^{-1}$ is bounded by Proposition \ref{p:privpen1} and hence \stably bounded by Lemma \ref{l:stb1}. Therefore $\rr$ is \stably bounded.

(2) Let $\cc^* L^{-1}\bb$ be a minimal realization of $\rr$. Then $L$ is invertible on $\allh$ by \cite[Theorem 3.1]{KVV1} and hence \privileged by Theorem \ref{t:privpen2}. The converse implication also follows by Theorem \ref{t:privpen2}.
\end{proof}

\begin{rem}
One may also ask an analogous question about functions $\rr\in\rx$ satisfying $\dom\rr=\all$. The answer to this question is much simpler \cite[Theorem 4.2]{KV}: a nc rational function is defined at every point in $\all$ if and only if it is a nc polynomial.
\end{rem}

\subsection{Functions in \texorpdfstring{$\ulx$}{x} and \texorpdfstring{$\ulx^*$}{x*}}\label{ss:xx}

We briefly discuss nc rational functions in $\ulx$ and $\ulx^*$, 
i.e., elements of the free skew field with involution $\rxxc$. They are naturally evaluated at $g$-tuples of matrices $X$ by replacing $x_j$ with $X_j$ and $x_j^*$ with $X_j^*$. We refer to \cite{KS} for analytic properties of these $*$-evaluations. On the other hand, if $\uly$ is a copy of $\ulx$, then we have skew field isomorphisms
\begin{align*}
\rxxc\to \rxyc,\qquad & x_j\mapsto x_j+iy_j,\ x_j^*\mapsto x_j-iy_j\\
\rxyc\to \rxxc,\qquad & x_j\mapsto\frac12 (x_j+x_j^*),\ y_j\mapsto\frac1{2i} (x_j-x_j^*).
\end{align*}
Thus we get a natural correspondence between $*$-evaluations of elements in $\rxxc$ and Hermitian evaluations of elements in $\rxyc$. Our main results on rational functions and pencils in self-adjoint variables can be easily adapted to this setup. We leave this as an exercise for the reader.

%%%%%%%%%%%%%%%%%%%%%%%%%%%%%%%%%%%%%%%%%%%%%
%%%%%%%%%%%%%%%%%%%%%%%%%%%%%%%%%%%%%%%%%%%%%

\section{Positive rational functions}\label{sec4}

In this section we solve a noncommutative analog of Hilbert's 17th problem: every positive regular nc rational function is a sum of hermitian squares, see Theorem \ref{t:nonneg}. For this we shall require a description of complexity of nc rational expressions. A {\bf sub-expression} of $r\in\re$ is any nc rational expression which appears during the construction of $r$. For example, if $r=((2+x_1)^{-1}x_2)x_1^{-1}$, then all its sub-expressions are
$$2,x_1,2+x_1,(2+x_1)^{-1},x_2,(2+x_1)^{-1}x_2,x_1^{-1},((2+x_1)^{-1}x_2)x_1^{-1}$$
We recursively define a complexity-measuring function $\tau:\re\to\N_0$ as follows:
\begin{enumerate}[label=(\alph*)]
	\item $\tau(\alpha)=0$ for $\alpha\in\kk$;
	\item $\tau(x_j)=1$ for $1\le j\le g$;
	\item $\tau(r_1+r_2)=\max\{\tau(r_1),\tau(r_2)\}$ for $r_1,r_2\in\re$;
	\item $\tau(r_1r_2)=\tau(r_1)+\tau(r_2)$ for $r_1,r_2\in\re$;
	\item $\tau(r^{-1})=2\tau(r)$ for $r\in\re$.
\end{enumerate}
Note that there is also a well-defined map $r\mapsto r^*$ on $\re$ that mimics the involution on $\rx$ and $\tau(r^*)=\tau(r)$ for all $r\in\re$.

\subsection{A sum of squares cone associated with a rational expression}

Throughout the rest of this section fix a non-degenerate expression $r\in\re$ and the following notation. Let $Q\subset\re$ be the finite set of all sub-expressions of $r$ and their images under the map $q\mapsto q^*$. Then define $\tilde{Q}=\{\qq\colon q\in Q\}\subset\rx$ and
$$\cV_k=\sum_{j=0}^k \kk \overbrace{\tilde{Q}\cdots \tilde{Q}}^j,\qquad
\cVsa_k=\{\rs\in\cV_k\colon \rs=\rs^*\},\qquad
\cS_{2k}=\left\{\sum_j\rs_j^*\rs_j\colon \rs_j\in\cV_k\right\}$$
for $k\in\N$. Then $\cS_{2k}\subseteq \cVsa_{2k}\subseteq \cV_{2k}$ are a convex cone, $\R$-linear space and $\kk$-linear space, respectively. Since $\cVsa_{2k}$ is finite-dimensional, every norm on $\cVsa_{2k}$ yields the usual Euclidean topology. We note that $Q$ and $\tilde{Q}$ are rational analogs of Newton chips \cite{BKP} for free polynomials.

\begin{prop}\label{p:closed}
$\cS_{2k}$ is closed in $\cVsa_{2k}$.
\end{prop}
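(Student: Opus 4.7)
The plan is the standard one: use Carathéodory's theorem to reduce $\cS_{2k}$ to the image of a continuous quadratic map $\Phi$ on a finite-dimensional space, and then observe that coercivity of $\Phi$ gives closedness.

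Since $\cVsa_{2k}$ is a finite-dimensional $\R$-vector space, say of dimension $N$, Carathéodory's theorem for convex cones implies that every element of $\cS_{2k}$ is a sum of at most $N$ hermitian squares. Therefore $\cS_{2k}=\Phi((\cV_k)^N)$, where
$$\Phi\colon(\cV_k)^N\to\cVsa_{2k},\qquad (\rs_1,\dots,\rs_N)\mapsto\sum_{j=1}^N\rs_j^*\rs_j$$
is continuous and satisfies $\Phi(\lambda\rs_1,\dots,\lambda\rs_N)=|\lambda|^2\Phi(\rs_1,\dots,\rs_N)$ for $\lambda\in\kk$.

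The crucial observation is that $\Phi^{-1}(0)=\{0\}$. Indeed, each $\rs_j\in\cV_k$ is a nc rational function, and there is a common self-adjoint tuple $X$ in the domains of all $\rs_j$: non-degeneracy of $r$ passes to every $q\in Q$, so each $q$ has nonempty self-adjoint domain in some $\herm{n}^g$ by \cite[Remark 6.8]{Vol}, and the corresponding finitely many Zariski-dense opens have nonempty intersection once $n$ is large enough. Evaluating $\sum_j\rs_j^*\rs_j=0$ at such an $X$ yields a vanishing sum of positive semidefinite matrices, forcing $\rs_j(X)=0$ for all $j$, hence $\rs_j=0$ in $\rx$.

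Fix any norms on $\cV_k^N$ and $\cVsa_{2k}$. The continuous function $\|\Phi(\cdot)\|$ is positive off $0$, hence attains some $c>0$ as its minimum on the unit sphere of $\cV_k^N$, and degree-$2$ homogeneity upgrades this to $\|\Phi(\rs)\|\geq c\|\rs\|^2$ for all $\rs\in(\cV_k)^N$. Consequently, if $p_n\to p$ with $p_n=\Phi(\rs^{(n)})\in\cS_{2k}$, the tuples $\rs^{(n)}$ are uniformly bounded; passing to a convergent subsequence $\rs^{(n_\ell)}\to\rs$ gives $p=\Phi(\rs)\in\cS_{2k}$ by continuity. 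The only non-routine step in this argument is the common-domain observation used to establish $\Phi^{-1}(0)=\{0\}$; everything else is compactness.
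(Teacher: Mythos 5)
Your approach is the same in outline as the paper's: Carath\'eodory bounds the number of hermitian squares, and a coercivity argument then gives closedness. However, the step establishing $\Phi^{-1}(0)=\{0\}$ has a gap. You choose a single tuple $X$ in the common self-adjoint domain of the $\rs_j$ and conclude that $\rs_j(X)=0$ for all $j$ forces $\rs_j=0$ in $\rx$. A single evaluation point cannot give this: $x_1$ vanishes at $X=0$ without being the zero function. Two standard repairs are available. One can observe that $\sum_j\rs_j^*\rs_j=0$ is an identity in $\rx$, so each $\rs_j$ vanishes on all of $\domh r$, which is dense in $\herm{n}^g$ for large $n$, and hence $\rs_j=0$. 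Alternatively, and this is what the paper does, one can invoke the CHSY lemma to pick $X\in\domh r$ at which evaluation is injective on $\cVsa_{2k}$; then $\rs_j(X)=0\Rightarrow(\rs_j^*\rs_j)(X)=0\Rightarrow\rs_j^*\rs_j=0\Rightarrow\rs_j=0$, the last implication because $\rx$ is a skew field. The paper's choice buys an additional simplification: defining $\|\rs\|_{\bullet}:=\|\rs(X)\|$ with the CHSY point yields the coercivity bound $\|\rs_{n,j}\|_\bullet^2\le\|\rr_n\|_\bullet$ directly from positive semidefiniteness, bypassing your compactness-on-the-unit-sphere argument. Aside from that gap (and the somewhat roundabout way you obtain a common evaluation point, when $\domh r$ already lies in the domain of every $q\in Q$), the proof is sound.
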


\begin{proof}
Since $\cVsa_{2k}$ is finite-dimensional, there exists $X\in\domh r$ such that
$$\forall \rs\in\cVsa_{2k}\colon\ \rs(X)=0\ \Rightarrow\ \rs=0$$
by the CHSY Lemma (\cite[Corollary 3.2]{CHSY} or \cite[Corollary 8.87]{BPT}). Hence we can define a norm on $\cVsa_{2k}$ by $\|\rs\|_{\bullet}\coloneq \|\rs(X)\|$. Also, finite-dimensionality of $\cVsa_{2k}$ implies that every element of $\cS_{2k}$ can be written as a sum of $N=1+\dim\cVsa_{2k}$ hermitian squares by Carath\'eodory's theorem \cite[Theorem I.2.3]{Bar}. Assume that a sequence $\{\rr_n\}_n\subset \cS_{2k}$ converges to $\rs\in \cVsa_{2k}$ with respect to $\|\cdot\|_{\bullet}$. If
$$\rr_n=\sum_{j=1}^N \rs_{n,j}^*\rs_{n,j},\qquad \rs_{n,j}\in\cV_k,$$
then the definition of our norm implies $\|\rs_{n_j}\|^2\le \|\rr_n\|$. In particular, the sequences $\{\rs_{n,j}\}_n\subset \cV_k$ for $1\le j\le N$ are bounded. Hence, after restricting to subsequences, we may assume that they are convergent: $\rs_j=\lim_n\rs_{n,j}$ for $1\le j\le N$. Consequently we have
\[\rs=\lim_n\rr_n=\sum_{j=1}^N \lim_n\left(\rs_{n,j}^*\rs_{n,j}\right)=\sum_{j=1}^N\rs_j^*\rs_j\in\cS_{2k}. \qedhere\]
\end{proof}

\subsection{Moore-Penrose evaluations}

In this subsection we generalize our notion of an evaluation of a nc rational expression. For $A\in\mat{n}$ let $A^\dagger\in\mat{n}$ be its Moore-Penrose pseudoinverse \cite[Section 7.3]{HJ}. Its properties that will be used in this section are
$$(A^\dagger)^*=(A^*)^\dagger,\qquad A^*=A^\dagger A A^*.$$
Given $r\in\re$ and $X\in\all$ we recursively define the {\bf Moore-Penrose evaluation} of $r$ at $X$, denoted $r\mpe(X)$:
\begin{enumerate}[label=(\alph*)]
	\item $\alpha\mpe(X)=\alpha I$ for $\alpha\in\kk$;
	\item $x_j\mpe(X)=X_j$ for $1\le j\le g$;
	\item $(r_1+r_2)\mpe(X)=r_1\mpe(X)+r_2\mpe(X)$ and
	\\ $(r_1r_2)\mpe(X)=r_1\mpe(X)r_2\mpe(X)$ for $r_1,r_2\in\re$;
	\item $(r^{-1})\mpe(X)=(r\mpe(X))^\dagger$ for $r\in\re$.
\end{enumerate}

Loosely speaking, with this kind of evaluation we replace all the inverses in an expression with Moore-Penrose pseudoinverses and the evaluation is then defined at any matrix point. Moore-Penrose evaluations of nc rational expressions frequently appear in control theory; see e.g.~\cite{BEFB}. We warn the reader that in general these evaluations do not respect the equivalence relation defining nc rational functions; also, Moore-Penrose evaluation is defined even for degenerate expressions. For example, $(0^{-1})\mpe(X)=0$ for all $X\in \all$. However, if $r\in\re$ is non-degenerate and $X\in\dom r$, then
$$r\mpe(X)=r(X)=\rr(X).$$

\begin{prop}\label{p:gns}
Let $r$ be a non-degenerate expression and assume the notation from the beginning of the section. If $\lambda:\cVsa_{2k+2}\to\R$ is a $\R$-linear functional satisfying $\lambda(\cS_{2k+2}\setminus\{0\})\subseteq \R_{>0}$, then there exists a scalar product $\langle\cdot,\cdot\rangle$ on $\cV_k$ and self-adjoint operators $X_j$ on $\cV_k$ such that
$$\lambda(\qq)=\left\langle q\mpe(X)1,1\right\rangle$$
for every $q=q^*\in Q$ with $4\tau(q)\le k$.	
\end{prop}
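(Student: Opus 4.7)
My plan is a GNS-type construction. First I would extend $\lambda$ to a $\kk$-linear functional $\Lambda:\cV_{2k+2}\to\kk$ (via real and imaginary parts when $\kk=\C$). Since $\cV_{k+1}\cdot\cV_{k+1}\subseteq\cV_{2k+2}$ and $p^*p\ne 0$ in the skew field $\rx$ for $p\ne 0$, the formula $\langle p_1,p_2\rangle:=\Lambda(p_2^*p_1)$ defines a scalar product on $\cV_{k+1}$, whose restriction to $\cV_k$ is the required scalar product. Let $\pi:\cV_{k+1}\to\cV_k$ be the orthogonal projection and define $X_j:\cV_k\to\cV_k$ by $X_j(u):=\pi(x_j u)$ when $x_j\in\tilde Q$ (taking $X_j:=0$ otherwise; this is harmless as such $X_j$ cannot appear in $q\mpe(X)$ for $q\in Q$). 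Self-adjointness of $X_j$ follows exactly as in the proof of Theorem \ref{t:privpen2}, using $x_j^*=x_j$ and the self-adjointness of $\pi$.

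The heart of the argument is the following stronger claim, which I would prove by induction on $\tau(q)$: for every $q\in Q$ and every $v\in\cV_m$ with $m+2\tau(q)\le k$,
\[
q\mpe(X)v=\qq v\quad\text{as elements of }\cV_{m+1}\subseteq\cV_k.
\]
Specializing to $v=1$ (so $m=0$) under the hypothesis $4\tau(q)\le k$ then yields $q\mpe(X)1=\qq$, whence $\langle q\mpe(X)1,1\rangle=\Lambda(\qq)=\lambda(\qq)$ for self-adjoint $q$, which is the assertion. The base cases ($q\in\kk$ and $q=x_j$) and the inductive step for sums are immediate. For a product $q=q_1q_2$, I would apply the inductive hypothesis first to $q_2$ on $v$ to obtain $\mathbbm{q}_2 v\in\cV_{m+1}$, then to $q_1$ on that; the required bound $(m+1)+2\tau(q_1)\le k$ reduces to $1\le 2\tau(q_2)$, automatic unless $q_2\in\kk$ (a case handled directly).

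The genuine obstacle is the inverse case $q=p^{-1}$, where $q\mpe(X)v=(p\mpe(X))^\dagger v$ must be identified with $\mathbbm{p}^{-1}v$. My plan has three ingredients. First, a parallel induction shows that $(r\mpe(X))^*=(r^*)\mpe(X)$ on $\cV_k$ for every $r\in Q$; the inverse step relies on the identity $(A^\dagger)^*=(A^*)^\dagger$ for Moore--Penrose pseudoinverses. Second, the inductive hypothesis applied to $p$ on $\mathbbm{p}^{-1}v\in\cV_{m+1}$ gives $p\mpe(X)(\mathbbm{p}^{-1}v)=\mathbbm{p}\cdot\mathbbm{p}^{-1}v=v$, exhibiting $\mathbbm{p}^{-1}v$ as a preimage of $v$. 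Third, I set $u:=\mathbbm{p}^{-*}\mathbbm{p}^{-1}v\in\cV_{m+2}$, using that $Q$ is closed under the expression-level $*$-operation so that $\mathbbm{p}^{-*}\in\tilde Q$; applying the inductive hypothesis to $p^*$ on $u$ then yields $(p^*)\mpe(X)u=\mathbbm{p}^*u=\mathbbm{p}^{-1}v$, so
\[
\mathbbm{p}^{-1}v\in\im\bigl((p^*)\mpe(X)\bigr)=\im\bigl((p\mpe(X))^*\bigr)=\bigl(\ker p\mpe(X)\bigr)^\perp.
\]
Combining the preimage property with this orthogonality forces $(p\mpe(X))^\dagger v=\mathbbm{p}^{-1}v$ by the minimum-norm characterization of the Moore--Penrose pseudoinverse. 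The principal difficulty is precisely this bookkeeping in the inverse step: the two inductive-hypothesis invocations need inputs in $\cV_{m+1}$ and $\cV_{m+2}$, which both fit within $m+2\tau(q)\le k$ because $\tau(p)=\tau(q)/2$; the hypothesis $4\tau(q)\le k$ in the proposition provides ample slack and absorbs any degenerate corner cases.
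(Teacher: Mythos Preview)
Your proposal is correct and follows essentially the same GNS-type construction as the paper's proof: extend $\lambda$ to a $\kk$-linear $\Lambda$, use it to define an inner product on $\cV_{k+1}$, set $X_j=\pi(x_j\,\cdot\,)$, and prove by structural induction that $q\mpe(X)$ acts as left multiplication by $\qq$ on a suitable initial segment, the delicate case being $q=p^{-1}$ where one exhibits $\mathbbm p^{-1}v$ both as a preimage under $p\mpe(X)$ and as lying in $\im(p\mpe(X))^*$.

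The only noteworthy difference is your bookkeeping parameter. The paper tracks $\tau(s)$ for a monomial $s\in Q^j$ and carries the condition $4\tau(q)+\tau(s)\le k$; when one prepends $q^{-*}q^{-1}$ to $s$ in the inverse step, $\tau(s)$ jumps by $4\tau(q)$, which forces the factor~$4$. You instead track the level $m$ with $v\in\cV_m$, so prepending $\mathbbm p^{-*}\mathbbm p^{-1}$ (two elements of $\tilde Q$) raises $m$ by only~$2$, and the inductive hypothesis closes under the weaker condition $m+2\tau(q)\le k$ (provided $\tau(p)\ge1$; the case $\tau(p)=0$ is a constant and trivial, as you note). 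Thus your inductive claim is marginally sharper and would in fact yield the conclusion for all $q=q^*\in Q$ with $2\tau(q)\le k$, though this extra strength is not needed for the proposition as stated. Your explicit mention of the identity $(r\mpe(X))^*=(r^*)\mpe(X)$ is also a small clarification over the paper, which uses it tacitly in the first equality of its display~\eqref{e:31}.
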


\begin{proof}
Let $\Lambda:\cV_{2k+2}\to\kk$ be the $\kk$-linear functional given by $\Lambda(\rs)\coloneq\lambda(\real \rs)+i\lambda(\imag \rs)$ if $\kk=\C$ and $\Lambda(\rs)\coloneq \lambda(\real \rs)$ if $\kk=\R$. Now $\langle \rs_1,\rs_2\rangle\coloneq\Lambda(\rs_2^*\rs_1)$ defines a scalar product on $\cV_{k+1}$. Let $\pi:\cV_{k+1}\to\cV_k$ be the orthogonal projection. If $x_j\in Q$, then define 
$$X_j:\cV_k\to\cV_k,\qquad \rs\mapsto\pi(x_j\rs).$$
It is clear that $X_j$ is a self-adjoint operator. We claim the following:
\\
\\
$(\star)$ {\it if $q\in Q$, then $q\mpe(X)\rs=\qq\rs$ holds for $s\in \bigcup_{j=0}^k \overbrace{Q\cdots Q}^j$ satisfying $4\tau(q)+\tau(s)\le k$.}
\\
\\
We prove $(\star)$ by the induction on the construction of $q$. Firstly, $(\star)$ obviously holds for $q\in \kk$ or $q\in Q\cap\{x_1\dots,x_g\}$. Next, if $(\star)$ holds for $q_1,q_2\in Q$ such that $q_1+q_2\in Q$ or $q_1q_2\in Q$, then it also holds for the latter. Finally, suppose that $(\star)$ holds for $q\in Q$ and assume $q^{-1}\in Q$. If $s\in \bigcup_{j=0}^k Q\cdots Q$ and $4\tau(q^{-1})+\tau(s)\le k$, then $4\tau(q)+(\tau(q^{-*})+\tau(q^{-1})+\tau(s))\le k$ and $q^{-*}q^{-1}s\in \bigcup_{j=0}^k Q\cdots Q$, so
\begin{align}
q\mpe(X)^*(\qq^{-*}\qq^{-1}\rs)&=(q^*)\mpe(X)(\qq^{-*}\qq^{-1}\rs)=\qq^*\qq^{-*}\qq^{-1}\rs=\qq^{-1}\rs, \label{e:31}\\
q\mpe(X)(\qq^{-1}\rs)&=\qq\qq^{-1}\rs=\rs. \label{e:32}
\end{align}
Since $\qq^{-1}\rs$ lies in the image of $q\mpe(X)^*$ by \eqref{e:31}, we have
$$\qq^{-1}\rs=(q\mpe(X)^\dagger q\mpe(X))(\qq^{-1}\rs)=(q\mpe(X))^\dagger\rs=(q^{-1})\mpe(X)\rs$$
by \eqref{e:32}.

In particular, if $q=q^*$ satisfies $4\tau(q)\le k$, then
$$\left\langle q\mpe(X)1,1\right\rangle=\Lambda\left(q\mpe(X)1\right)=\lambda(\qq)$$
by $(\star)$.
\end{proof}

\begin{prop}\label{p:nonneg}
Let $r\in\re$ be a non-degenerate expression and $t=\tau(r)$. If $\rr\notin\cS_{8t+2}$, then there exists $X\in\allh$ of size $\dim\cV_{4t}$ such that $r\mpe(X)$ is not positive semidefinite.
\end{prop}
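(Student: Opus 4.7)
The plan is to separate $\rr$ from the closed convex cone $\cS_{8t+2}\subseteq\cVsa_{8t+2}$ by a linear functional $\lambda$, and then feed $\lambda$ into Proposition \ref{p:gns} to manufacture a tuple $X$ at which $\langle r\mpe(X)\cdot 1,1\rangle<0$. One may assume $\rr=\rr^*$ from the start: if $\rr\ne\rr^*$, then a short induction on the construction of $r$ shows $(r^*)\mpe(X)=(r\mpe(X))^*$, so $r\mpe(X)$ fails to be Hermitian at generic $X\in\allh$ and the statement is automatic.

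By Proposition \ref{p:closed}, $\cS_{8t+2}$ is a closed convex cone in the finite-dimensional $\R$-vector space $\cVsa_{8t+2}$. It is pointed: by the CHSY Lemma there exists $Y\in\domh r$ at which $\rs(Y)=0$ forces $\rs=0$, ruling out a nonzero $\rs$ lying in $\cS_{8t+2}\cap(-\cS_{8t+2})$. Invoking \cite[Theorem 2.5]{Kle}, exactly as in the proof of Theorem \ref{t:privpen2}, yields a $\R$-linear functional $\lambda\colon\cVsa_{8t+2}\to\R$ with $\lambda(\cS_{8t+2}\setminus\{0\})\subseteq\R_{>0}$ and $\lambda(\rr)<0$. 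Now apply Proposition \ref{p:gns} with $k=4t$ (so $2k+2=8t+2$ matches our hypothesis). The output is a scalar product on $\cV_{4t}$ together with self-adjoint operators $X_1,\dots,X_g$ on $\cV_{4t}$; fixing an orthonormal basis identifies these with a tuple $X\in\allh$ of size $\dim\cV_{4t}$, which is the claimed bound.

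The key identity $(\star)$ embedded in the proof of Proposition \ref{p:gns} is established for every $q\in Q$ (no self-adjointness needed); applied to $q=r$ (which lies in $Q$ and satisfies $4\tau(r)\le k$) with $s=1$, it gives $r\mpe(X)\cdot 1=\rr$ in $\cV_{4t}$. The complex-linear extension $\Lambda$ used in that proof then delivers
\[
\langle r\mpe(X)\cdot 1,\,1\rangle \;=\; \Lambda(\rr) \;=\; \lambda(\real\rr)+i\lambda(\imag\rr).
\]
Since $\rr=\rr^*$ forces $\real\rr=\rr$ and $\imag\rr=0$, this equals $\lambda(\rr)<0$. Were $r\mpe(X)$ positive semidefinite, $\langle r\mpe(X)\cdot 1,1\rangle$ would be real and nonnegative, contradicting $\lambda(\rr)<0$. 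The main obstacle is securing the \emph{strict} inequality $\lambda(\rr)<0$: Hahn–Banach by itself only supplies $\lambda\ge 0$ on $\cS_{8t+2}$ and $\lambda(\rr)\le 0$, and one needs to exploit pointedness of $\cS_{8t+2}$ by perturbing a weak separator with a functional in the interior of the dual cone, thereby upgrading both $\lambda(\cS_{8t+2}\setminus\{0\})>0$ and $\lambda(\rr)<0$ at once. A secondary subtlety is tracking that Proposition \ref{p:gns}'s $(\star)$ really applies to $r$ itself and not just to a self-adjoint surrogate; this is precisely what the complex-linear step in defining $\Lambda$ from $\lambda$ accommodates.
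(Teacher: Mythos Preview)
Your proof is correct and follows essentially the same route as the paper: reduce to $\rr=\rr^*$, separate $\rr$ from the closed cone $\cS_{8t+2}$, and feed the resulting functional into Proposition~\ref{p:gns} with $k=4t$. The paper invokes the Hahn--Banach separation theorem from \cite{Bar} directly, whereas you appeal to \cite[Theorem~2.5]{Kle}; note that Klee's result, as used in Theorem~\ref{t:privpen2}, separates a pointed cone from a \emph{subspace}, not from a single point, so your subsequent remark about perturbing a weak separator by an interior dual functional is the actual argument. Your observation that $(\star)$ in the proof of Proposition~\ref{p:gns} applies to $r$ itself (without $r=r^*$ as expressions) is a worthwhile clarification the paper leaves implicit.
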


\begin{proof}
If $\rr\neq\rr^*$, then there clearly exists $X\in\allh$ such that $\rr(X)$ is not self-adjoint; hence we assume that $\rr=\rr^*$. By Proposition \ref{p:closed} and the Hahn-Banach separation theorem \cite[Theorem III.1.3]{Bar} there exists a $\R$-linear functional $\lambda:\cVsa_{8t+2}\to\R$ such that $\lambda(\cS_{8t+2}\setminus\{0\})\subseteq \R_{>0}$ and $\lambda(\rr)<0$. By Proposition \ref{p:gns} there exists $X\in\allh$ and a vector $\vv$ of size $\dim\cV_{4t}$ such that
\[\left\langle r\mpe(X)\vv,\vv\right\rangle=\lambda(\rr)<0. \qedhere\]
\end{proof}

\begin{rem}
The converse of Proposition \ref{p:nonneg} does not hold. For example, if $r=x^{-1}x-1$, then $\rr=0$ is a sum of hermitian squares, but $r\mpe(0)=-1$ is not positive semidefinite.
\end{rem}

\subsection{Regular positive rational functions}

As a consequence of Proposition \ref{p:nonneg} we obtain the following version of Artin's theorem. It is a rational function analog of Helton's sum of hermitian squares theorem \cite{Hel}.

\begin{thm}\label{t:nonneg}
Let $\rr\in\rx$ be regular. Then $\rr(X)\succeq0$ for all $X\in\allh$ if and only if
$$\rr=\sum_j\rs_j^*\rs_j$$
for some regular $\rs_j\in\rx$.
\end{thm}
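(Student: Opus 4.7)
The reverse direction is immediate: if $\rr = \sum_j \rs_j^* \rs_j$ with each $\rs_j \in \rx$ regular, then each $\rs_j(X)^* \rs_j(X)$ is positive semidefinite on $\allh$, and $\rr$ is regular as a sum of products of regular functions. So the entire content of the theorem lies in the forward direction: given a regular $\rr$ with $\rr(X) \succeq 0$ for all $X \in \allh$, produce a sum-of-squares decomposition with regular summands.

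My plan is to feed a well-chosen representative of $\rr$ into Proposition \ref{p:nonneg}. By Corollary \ref{c:expr}, $\rr$ admits a regular rational expression $r \in \re$, so $\domh r = \allh$. At every $X \in \allh$, each inverse sub-expression $q^{-1}$ occurring in $r$ is applied to an invertible matrix $q(X)$, hence the Moore--Penrose pseudoinverse coincides with the ordinary inverse and $r\mpe(X) = r(X) = \rr(X)$, which is positive semidefinite by hypothesis. Setting $t = \tau(r)$, the contrapositive of Proposition \ref{p:nonneg} then yields $\rr \in \cS_{8t+2}$; that is, there exist $\rs_1, \dots, \rs_N \in \cV_{4t+1}$ with $\rr = \sum_{j=1}^N \rs_j^* \rs_j$.

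What remains is to verify that each $\rs_j$ is a regular element of $\rx$. Every element of $\cV_{4t+1}$ is a $\kk$-linear combination of products of elements of $\tilde Q$, and regularity of noncommutative rational functions is preserved under $\kk$-linear combinations and products (since $\domh(\rs + \rs') \supseteq \domh \rs \cap \domh \rs'$ and similarly for products). Hence it suffices to check that every $\qq \in \tilde Q$ is regular. But $Q$ consists of sub-expressions of $r$ together with their images under the formal involution $q \mapsto q^*$: a sub-expression $q$ of $r$ must itself be syntactically defined at every $X \in \allh$, for the inductive construction of $r(X)$ requires each of its sub-expressions to evaluate successfully at $X$, so every such $q$ is a regular expression; the corresponding $q^*$ is then regular as well because $q^*(X) = q(X)^*$ on self-adjoint inputs. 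Thus every $\rs_j$ is regular, completing the decomposition.

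I do not expect a serious obstacle at this stage: the technical heart of the argument has already been absorbed into Proposition \ref{p:nonneg} (Hahn--Banach separation of $\rr$ from the closed cone $\cS_{2k}$, combined with the GNS-style construction in Proposition \ref{p:gns} and the Moore--Penrose evaluation trick). The crucial design choice that propagates regularity from $\rr$ to the summands is simply that the cone $\cS_{2k}$ is constructed from the sub-expressions of a \emph{fixed regular} expression $r$, rather than from arbitrary expressions for $\rr$; this is precisely why Corollary \ref{c:expr} is invoked at the outset.
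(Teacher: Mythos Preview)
Your proof is correct and follows essentially the same route as the paper: invoke Corollary~\ref{c:expr} to obtain a regular expression $r$, note that Moore--Penrose evaluation then coincides with ordinary evaluation on $\allh$, and apply the contrapositive of Proposition~\ref{p:nonneg}. You spell out in more detail than the paper why the resulting summands $\rs_j\in\cV_{4t+1}$ are regular (namely, because they are built from sub-expressions of the regular expression $r$), but this is exactly the mechanism the paper's construction was designed to exploit.
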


\begin{proof}
The implication $(\Leftarrow)$ is clear, so consider $(\Rightarrow)$. Since $\rr$ is regular, it admits a regular rational expression $r$ by Corollary \ref{c:expr}. If $\rr$ is not a sum of hermitian squares, then there exists $X\in\allh$ such that $r\mpe(X)$ is not positive semidefinite by Proposition \ref{p:nonneg}. Since $X\in\domh r$, we have $r\mpe(X)=\rr(X)$.
\end{proof}

\begin{rem}
Let $\rr\in\rx$ be a regular rational function that is not a sum of hermitian squares. If $r\in\re$ is its regular representative and $t=\tau(r)$, then there exists $X\in\allh$ of size $\dim\cV_{2t}$ such that $r\mpe(X)$ is not positive semidefinite. Indeed, this improved bound follows by replacing 4 with 2 in Proposition \ref{p:gns} since \eqref{e:31} becomes unnecessary when dealing with proper inverses.
\end{rem}

For strictly positive regular nc rational functions also see Remark \ref{r:pos}. Moreover, by the same reasoning as in Subsection \ref{ss:xx}, a suitably modified version of Theorem \ref{t:nonneg} holds for nc rational functions in $\ulx$ and $\ulx^*$ over $\C$.

%%%%%%%%%%%%%%%%%%%%%%%%%%%%%%%%%%%%%%%%%%%%%
%%%%%%%%%%%%%%%%%%%%%%%%%%%%%%%%%%%%%%%%%%%%%

\section{Examples and algorithms}\label{sec5}

In this section we present efficient algorithms to check 
whether $\rr\in\rx$ is regular and whether it is a sum of hermitian based on semidefinite programming. We finish the section with worked out examples.

\subsection{Testing regularity}

Our main results are effective
and enable us to devise an algorithm to check for 
regularity of a nc rational function.

\subsubsection{\Privileged pencils}\label{ssec:alg1}

Let $L=A_0+\sum_{j>0}A_jx_j\in \mat{d\times e}\otimes \px$
with $d\geq e$. (If $d<e$, simply replace $L$ by $L^*$.)
Now solve the following feasibility semidefinite program (SDP) 
for $D\in \mat{e\times d}$:
\begin{equation}\label{eq:sdp1}
\begin{split}
\real(DA_0)& \succeq0  \\
\trc(DA_0) & =1 \\
\real(DA_j)&=0 \quad\text{for } j>0.
\end{split}
\end{equation}
 (See e.g.~\cite{WSV,BPT} for more on SDPs.)
If \eqref{eq:sdp1} is infeasible, then $L$ is not \privileged. If
the output is a $D$ with
$\real(DA_0)\succ0$, then $L$ is (\stably) \privileged. Otherwise replace
$L$ by the linear pencil $LV$, where the columns of $V$ form a basis for
$\ker \real(DA_0)$, and repeat the algorithm.
Since $LV$ is of smaller size than $L$, the procedure will eventually terminate.

\subsubsection{Regular nc rational functions}
Given $\rr\in\rx$, we 
use an efficient (linear-algebra-based) algorithm, cf.~\cite[Section II.3]{BR},
to construct a realization of $\rr$ and then reducing it to a minimal one, say
$\rr=\cc^* L^{-1}\bb,$
where $L$ is a  $d\times d$ pencil. Now use the algorithm in Subsection \ref{ssec:alg1} below
to check whether $L$ is \privileged. By Theorem \ref{t:privreal}, $\rr$ is 
regular if and only if $L$ is \privileged.

\begin{rem}\label{r:pos}
This algorithm also yields a procedure to check whether a regular rational function is strictly positive everywhere: for a regular $\rr\in\rx$ we have $\rr(X)\succ0$ for all $X\in\allh$ if and only if $\rr(0)\succ0$ and $\rr^{-1}$ is regular. In particular, this can be applied for testing whether a nc polynomial is positive everywhere.
\end{rem}

\subsection{Testing positivity}
In this subsection we present an efficient algorithm
based on SDP
 to check whether a regular rational function is a sum of hermitian squares, i.e., whether it is positive everywhere.
We point out this is in sharp contrast to the classical commutative case \cite{BCR} where 
no efficient algorithms exist (in $>2$ variables) to check whether a rational function
$r\in\R(X)$ is globally positive.

Let $\rr\in\rx$. A {\bf positively \privileged realization} is one of the form
$$\rr=\bb^* \left(A_0+\sum_jA_jx_j\right)^{-1}\bb,\qquad \real A_0\succeq0,\qquad \real A_j=0 \quad\text{for } j>0.$$

\begin{prop}\label{p:sos}
A nc rational function $\rr=\rr^*\in\rx$ is a sum of hermitian squares if and only if it admits a positively \privileged realization.
\end{prop}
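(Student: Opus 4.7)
Plan. For the easy direction $(\Leftarrow)$, I would exploit that positively privileged means $L+L^*=2\real A_0$ is a \emph{constant} positive semidefinite matrix (the variable parts cancel because $\real A_j=0$ for $j>0$). Factoring $\real A_0=R^*R$ and using the elementary identity $L^{-1}+L^{-*}=L^{-1}(L+L^*)L^{-*}$, I get
$$L^{-1}+L^{-*}=2\,L^{-1}R^*RL^{-*}.$$
Combining with the hypothesis $\rr=\rr^*$ (which gives $\rr=\tfrac12\bb^*(L^{-1}+L^{-*})\bb$) displays
$$\rr=(RL^{-*}\bb)^*(RL^{-*}\bb)=\sum_i\rs_i^*\rs_i,$$
where the $\rs_i$ are the entries of the nc rational vector $RL^{-*}\bb$, exhibiting $\rr$ as a sum of hermitian squares.

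For the converse $(\Rightarrow)$ I would start from an SOS decomposition $\rr=\sum_i\ss_i^*\ss_i$, stack the $\ss_i$ into a column $\vec{\ss}=(\ss_1,\dots,\ss_N)^T$, and combine individual realizations $\ss_i=\cc_i^*L_i^{-1}\bb_i$ (block-diagonalizing the $L_i$ and stacking the $\bb_i$, $\cc_i$) into a joint realization $\vec{\ss}=C^*L^{-1}\bb$ with $L$ a $d\times d$ pencil, $\bb\in\kk^d$ and $C\in\kk^{d\times N}$, so that $\rr=\bb^*L^{-*}CC^*L^{-1}\bb$. The main construction is then the $2d\times 2d$ block pencil
$$\tilde L=\begin{pmatrix}0 & L\\ -L^* & CC^*\end{pmatrix},\qquad \tilde\bb=\begin{pmatrix}\bb\\ 0\end{pmatrix}.$$
Since $L$ is invertible as a matrix over $\rx$, so is $\tilde L$, and a direct row-reduction gives
$$\tilde L^{-1}=\begin{pmatrix}L^{-*}CC^*L^{-1} & -L^{-*}\\ L^{-1} & 0\end{pmatrix},$$
whence $\tilde\bb^*\tilde L^{-1}\tilde\bb=\bb^*L^{-*}CC^*L^{-1}\bb=\rr$, with the same vector on both sides as required.

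To finish I would verify that $\tilde L$ is positively privileged. Writing $L=A_0+\sum_j A_jx_j$, the constant coefficient of $\tilde L$ equals $\begin{pmatrix}0 & A_0\\ -A_0^* & CC^*\end{pmatrix}$, whose real part is $\begin{pmatrix}0 & 0\\ 0 & CC^*\end{pmatrix}\succeq 0$ (the off-diagonals $A_0$ and $-A_0^*$ cancel against their adjoints, while $CC^*$ is self-adjoint); and for each $j>0$ the coefficient $\begin{pmatrix}0 & A_j\\ -A_j^* & 0\end{pmatrix}$ is skew-adjoint, so its real part vanishes. I do not anticipate a serious obstacle---the only cleverness needed is hitting upon the anti-diagonal block form of $\tilde L$, which is precisely what makes the contribution of $L+L^*$ (the obstruction to a positively privileged structure in $L$ itself) cancel out of $\tilde L+\tilde L^*$; once this form is written down, both halves of the verification reduce to mechanical block-matrix algebra.
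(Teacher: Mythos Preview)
Your proposal is correct and essentially matches the paper's proof: for $(\Leftarrow)$ the paper also uses $\real(L^{-1})=L^{-*}\real(L)L^{-1}=L^{-*}R^*RL^{-1}$ to write $\rr=(RL^{-1}\bb)^*(RL^{-1}\bb)$, and for $(\Rightarrow)$ the paper uses the same $2d\times 2d$ antidiagonal block pencil $\begin{psmallmatrix}\cc_j\cc_j^* & L_j^*\\ -L_j & 0\end{psmallmatrix}$ (your $\tilde L$ with the block rows and columns swapped), applied to each summand separately and then direct-summed rather than stacked globally as you do. The differences are purely cosmetic.
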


\begin{proof}
$(\Rightarrow)$ Let $\rr=\sum_j\rs_j^*\rs_j$ and $\rs_j=\cc_j^*L_j^{-1}\bb_j$. Then
$$\rs_j^*\rs_j=
\begin{pmatrix}0 & b_j^*\end{pmatrix}
\begin{pmatrix}\cc_j\cc_j^* & L_j^* \\ -L_j & 0\end{pmatrix}^{-1}
\begin{pmatrix}0 \\ b_j\end{pmatrix}
$$
is a positively \privileged realization and the direct sum of these pencils and vectors yields a positively \privileged realization of $\rr$.  

$(\Leftarrow)$ Let $\rr=\bb^* L^{-1}\bb$ be a positively \privileged realization. Then
$$\real \left(L^{-1}\right)=L^{-*}\real (L)L^{-1}=L^{-*}R^*RL^{-1}$$
for some constant matrix $R$ by assumption. Since $\rr$ is self-adjoint we have
$$\rr=\real\left(\bb^* L^{-1}\bb\right)=\bb^*\real \left(L^{-1}\right)\bb
=\left(RL^{-1}\bb\right)^*\left(RL^{-1}\bb\right),$$
so $\rr$ is a sum of hermitian squares.
\end{proof}

\begin{rem}
If $\kk=\R$, then a positively \privileged realization automatically yields a symmetric nc rational function.
\end{rem}

\subsubsection{Sum of squares testing}

Retain the notation of Section \ref{sec4}. 
Let $t=\tau(r)$ and suppose that $\rr=\rr^*$. To test whether $\rr$ is a sum of
hermitian squares, we proceed as follows. Pick a basis for $\cV_{2t+1}$ and stack
the elements of the basis into a vector, say $W$. Then $\rr$ is a sum of squares
if and only if the SDP \eqref{eq:sos} is feasible.
\begin{equation}\label{eq:sos}
\begin{split}
G & \succeq0\\
\text{s.t. \;}\rr&=W^* G W.
\end{split}
\end{equation}
To implement the equality in \eqref{eq:sos} 
we evaluate $\rr$ and $W^*GW$
at sufficiently many tuples of random self-adjoint matrices $X\in\allh$ of size
\begin{equation}\label{e:51}
\kappa(r)(1+(2t+1)(\dim\cV_{2t+1})^2),
\end{equation}
where
$$\kappa(r)=\# (\text{\small constant terms in }r)
+2\cdot \# (\text{\small symbols in }r)
+\# (\text{\small inverses in }r).$$ 
We refer to \cite[Subsection 6.1]{Vol} for the bound \eqref{e:51}.

Each solution $G$ to \eqref{eq:sos} yields a sum of squares decomposition of $\rr$. Namely,
letting $G=H^*H$ and $\rs=HW$, we have $\rr=\rs^*\rs = \sum_j \rs_j^* \rs_j$, 
where $\rs_j$ are the entries of the vector $\rs$. Finally, as in the proof of Proposition 
\ref{p:sos}, such a sum of squares decomposition can be employed to construct a positively \privileged realization for $\rr$.

\begin{exa}
A sum of hermitian squares does not necessarily admit a positively \privileged realization that is also a minimal one. For example, $x_1$ has a realization of size $2$, so $x_1^2$ has a positively \privileged realization of size $4$ by the proof of Proposition \ref{p:sos}. However, it can be checked that $x_1^2$ admits a realization of size $3$ but does not admit a positively \privileged realization of size $3$.
\end{exa}

\subsection{Examples}\label{ss:exa}

\begin{exa}
Let $\kk=\R$ and
$$\rr=\left((1-x_1x_2)(1-x_2x_1)+x_1^2\right)^{-1}.$$
While one can show that $\rr$ is regular using elementary arguments, we demonstrate this fact by applying our algorithm. Firstly we take a minimal realization of $\rr$ with the corresponding pencil
$$L=
\begin{pmatrix}
1 & 0 & 0 & -x_2 \\
0 & 1 & -x_2 & 0 \\
0 & -x_1 & 1 & -x_1 \\
-x_1 & 0 & x_1 & 1
\end{pmatrix}
=A_0+A_1x_1+A_2x_2.
$$
The system $\real(DA_1)=\real(DA_2)=0$ has a solution space of dimension $2$. Adding the constraint $\real(DA_0)\succeq0$ we obtain a one-dimensional salient convex cone $\cC$. For every $D\in\cC$ we have $\rk\real(DA_0)=2$, so $\rr$ is not \stably bounded. By choosing
$$D=
\begin{pmatrix}
0 & 0 & -1 & 0 \\
0 & 0 & 0 & 1 \\
1 & 0 & 1 & 0 \\
0 & -1 & 0 & 1
\end{pmatrix}
$$
we get
$$\real(DA_1)=\real(DA_2)=0,\qquad \real(DA_0)=\diag(0,0,1,1).$$
Hence let
$$L'=
\begin{pmatrix}
1 & 0 \\
0 & 1 \\
0 & -x_1 \\
-x_1 & 0
\end{pmatrix}.
$$
By choosing
$$D'=\begin{pmatrix}1 & 0 & 0 & 0\\ 0 & 1 & 0 & 0\end{pmatrix}$$
we verify that $L'$ is \privileged. Therefore $L$ is \privileged and so $\rr$ is regular.
\end{exa}

\begin{exa}
The rational function
$$\rr=\left(
2+ (x_1x_2-x_1-2x_2) \left(1+x_2^2\right)^{-1}+ (x_1+x_2-1) \left(1+x_2^2\right)^{-1}x_1
\right)^{-1}$$
is \stably bounded since it admits a realization with the \stably \privileged pencil
$$\begin{pmatrix}
1 & -x_2 & 0 \\
x_2 & 1 & -x_1-x_2+1 \\
0 & x_1+x_2-1 & 1 
\end{pmatrix}.$$
\end{exa}

\begin{exa}
Let $\kk=\C$ and consider
$$\rr=\left(1 + x_2^2 - ((1-i) x_1 + x_2) 
\left(1 + 2 x_1^2\right)^{-1} ((1+ i) x_1 + x_2)\right)^{-1}.$$
Note that $(X_1,X_2)\in\domh\rr$ for every pair of commuting hermitian matrices $X_1$ and $X_2$. It can be checked that $\rr$ admits a minimal realization with the pencil
$$L=
\begin{pmatrix}
1 & (-1-i) x_1 & -x_1 & 0 \\
0 & 1 & 0 & -x_1 \\
(-1+i) x_2 & 0 & 1 & -x_2 \\
0 & (-1-i) x_2 & 0 & 1
\end{pmatrix}
=A_0+A_1x_1+A_2x_2.$$
If
$$D=\begin{pmatrix}
0 & 0 & 0 & i \\
0 & 1+i & i & 0 \\
0 & i & 0 & 0 \\
i & 0 & 0 & \frac{1}{2}+\frac{i}{2}
\end{pmatrix},
$$
then
$$\real(DA_1)=\real(DA_2)=0,\qquad \real(DA_0)=\diag\left(0,1,0,\frac12\right).$$
Therefore we are left with
$$L'=\begin{pmatrix}
1 & -x_1 \\
0 & 0 \\
(-1+i) x_2 & 1 \\
0 & 0 
\end{pmatrix}=A_0'+A_1'x_1+A_2'x_2.$$
But for every $D'$ satisfying $\real(D'A_1')=\real(D'A_2')=0$ we have $\real(D'A_0')=\begin{psmallmatrix}
0&\alpha \\ \bar{\alpha}& 0\end{psmallmatrix}$ for $\alpha\in\C$, so $L'$ and $L$ are not \privileged. Hence $\rr$ is not regular. To find $X\in \mathcal{M}^2_{\operatorname{sa}}\setminus\dom \rr$ consider the structure of $L'$. We see that if $\frac12+\frac{i}{2}$ is an eigenvalue of $X_2X_1$, then $(X_1,X_2)\notin\domh\rr$. For a concrete example, take
$$X_1=\begin{pmatrix}
0 & 1+i \\
1-i & 0
\end{pmatrix},\qquad
X_2=\begin{pmatrix}
0 & \frac{i}{2} \\
-\frac{i}{2} & 0
\end{pmatrix}.
$$
\end{exa}

\begin{exa}
Again let $\kk=\R$. Another nontrivial example is
the following inverse of a sum of hermitian squares:
$$\rr=\left(\left(1+(x_2x_1)^2x_2^2\right)\left(1+x_2^2(x_1x_2)^2\right)-\left(x_1x_2-x_2x_1\right)^2\right)^{-1}.$$
The size of its minimal realization is $15$. With routine computation one observes that
$$\real(DA_1)=\real(DA_2)=0 \text{ and } \real(DA_0)\succeq0 \ \Rightarrow \ \real(DA_0)=0,$$
so $\rr$ is not regular. However, it is not apparent which concrete tuple of symmetric matrices is not contained in $\domh\rr$; using brute force one can check that $\domh\rr=\opm_2^{\operatorname{sa}}(\R)$ and $\domh\rr\neq\opm_3^{\operatorname{sa}}(\R)$.
\end{exa}

%%%%%%%%%%%%%%%%%%%%%%%%%%%%%%%%%%%%%%%%%%%%%
%%%%%%%%%%%%%%%%%%%%%%%%%%%%%%%%%%%%%%%%%%%%%


\begin{thebibliography}{KK}

\bibitem[AM15]{AM}
J. Agler, J. E. McCarthy:
{\it Global holomorphic functions in several noncommuting variables}
Canad. J. Math. 67 (2015) 241--285. 	

\bibitem[Ami66]{Ami}
S.A. Amitsur:
{\it Rational identities and applications to algebra and geometry},
J. Algebra 3 (1966) 304--359.

\bibitem[BGM05]{BGM}
J. A. Ball, G. Groenewald, T. Malakorn:
{\it Structured noncommutative multidimensional linear systems}, 
SIAM J. Control Optim. 44 (2005) 1474--1528.

\bibitem[BK-V15]{BKV}
J. A. Ball, D. S. Kaliuzhnyi-Verbovetskyi:
{\it Schur-Agler and Herglotz-Agler classes of functions: positive-kernel decompositions and transfer-function realizations},
Adv. Math. 280 (2015) 121--187.

\bibitem[Bar02]{Bar}
A. Barvinok:
{\it A course in convexity}, 
Graduate Studies in Mathematics 54, American Mathematical Society, Providence, RI, 2002.

\bibitem[BMS16]{BMS}
S. T. Belinschi, T. Mai, R. Speicher:
{\it Analytic subordination theory of operator-valued free additive convolution 
	and the solution of a general random matrix problem},
to appear in J. Reine Angew. Math.

\bibitem[Ber76]{Ber}
G. M. Bergman:
{\it Rational relations and rational identities in division rings. I},
J. Algebra 43 (1976) 252--266.
	
\bibitem[BR11]{BR}
J. Berstel, C. Reutenauer:
{\it Noncommutative rational series with applications},
Encyclopedia of Mathematics and its Applications 137, 
Cambridge University Press, Cambridge, 2011.

\bibitem[BPT13]{BPT}
G. Blekherman, P. A. Parrilo, R. R. Thomas (eds.):
{\it Semidefinite optimization and convex algebraic geometry},
MOS-SIAM Ser. Optim. 13, SIAM, Philadelphia, PA, 2013. 

\bibitem[BCR98]{BCR}
J. Bochnak, M. Coste, M.F. Roy: 
{\it Real algebraic geometry},
Results in Mathematics and Related Areas (3) 36, 
Springer-Verlag, Berlin, 1998.

\bibitem[BEFB94]{BEFB}
S. Boyd, L. El Ghaoui, E. Feron, V. Balakrishnan:
{\it Linear matrix inequalities in system and control theory},
SIAM Studies in Applied Mathematics {15}, Society for Industrial and Applied Mathematics (SIAM), Philadelphia, PA, 1994.

\bibitem[Br\"a11]{Bra}
P. Br{\"a}nd{\'e}n:
{\it Obstructions to determinantal representability},
Adv. Math. 226 (2011) 1202--1212. 

\bibitem[BKP16]{BKP}
S. Burgdorf, I. Klep, J. Povh:
{\it Optimization of polynomials in non-commuting variables},
SpringerBriefs in Mathematics, Springer International Publishing, 2016.


\bibitem[CHSY03]{CHSY}
J. F. Camino, J. W. Helton, R. E. Skelton, J. Ye:
{\it Matrix inequalities: a symbolic procedure to determine convexity automatically},
Integral Equations Operator Theory 46 (2003) 399--454.

\bibitem[Coh95]{Coh}
P. M. Cohn:
{\it Skew fields. Theory of general division rings}, 
Encyclopedia of Mathematics and its Applications 57, Cambridge University Press, Cambridge, 1995.

\bibitem[DAn11]{DAn}
J. P. D'Angelo:
{\it Hermitian analogues of Hilbert's 17-th problem},
Adv. Math. 226 (2011) 4607--4637. 

\bibitem[DM]{DM}
H. Derksen, V. Makam:
{\it Polynomial degree bounds for matrix semi-invariants},
preprint \href{http://arxiv.org/abs/1512.03393}{\tt	arXiv:1512.03393}.

\bibitem[GB83]{GB}
R. P. Gilbert, J. L. Buchanan:
{\it First order elliptic systems: a function theoretic approach},
Mathematics in Science and Engineering 163, Academic Press, Inc., Orlando, FL, 1983.

\bibitem[GKLLRT95]{GKLLRT}
I. M. Gelfand, D. Krob, A. Lascoux, B. Leclerc, V. S. Retakh, J.-Y. Thibon:
{\it Noncommutative symmetric functions},
Adv. Math. 112 (1995) 218--348.

\bibitem[GK-VVW16]{GKVVW}
A. Grinshpan, D. S. Kaliuzhnyi-Verbovetskyi, V. Vinnikov, H. J. Woerdeman:
{\it Contractive determinantal representations of stable polynomials on a matrix polyball},
Math. Z. 283 (2016) 25--37. 

\bibitem[Hel02]{Hel}
J. W. Helton:
{\it ``Positive'' noncommutative polynomials are sums of squares},
Ann. of Math. (2) 156 (2002) 675--694.

\bibitem[HKM16]{HKM}
J. W. Helton, I, Klep, S. McCullough:
{\it The Tracial Hahn-Banach Theorem, Polar Duals, Matrix Convex Sets, and Projections of Free Spectrahedra},
to appear in J. Eur. Math. Soc.

\bibitem[HMP07]{HMP}
J. W. Helton, S. McCullough, M. Putinar:
Strong majorization in a free $*$-algebra,
Math. Z. 255 (2007) 579--596.

\bibitem[HMV06]{HMV}
J. W. Helton, S. McCullough, V. Vinnikov:
{\it Noncommutative convexity arises from linear matrix inequalities},
J. Funct. Anal. 240 (2006) 105--191.

\bibitem[HV07]{HV}
J. W. Helton, V. Vinnikov:
{\it Linear matrix inequality representation of sets},
Comm. Pure Appl. Math. 60 (2007) 654--674.

\bibitem[HJ85]{HJ}
R. A. Horn, C. R. Johnson:
{\it Matrix analysis},
Cambridge University Press, Cambridge, 1985.

\bibitem[K-VV09]{KVV1}
D. S. Kaliuzhnyi-Verbovetskyi, V. Vinnikov:
{\it Singularities of rational functions and minimal factorizations: the noncommutative and the commutative setting}, 
Linear Algebra Appl. 430 (2009) 869--889.

\bibitem[K-VV12]{KVV2}
D. S. Kaliuzhnyi-Verbovetskyi, V. Vinnikov:
{\it Noncommutative rational functions, their difference-differential calculus and realizations}, 
Multidimens. Syst. Signal Process. 23 (2012) 49--77.

\bibitem[Kle55]{Kle}
V. L. Klee:
{\it Separation properties of convex cones},
Proc. Amer. Math. Soc. 6 (1955) 313--318. 

\bibitem[K\v{S}]{KS}
I. Klep, \v{S}. \v{S}penko:
{\it Free function theory through matrix invariants},
to appear in Canad. J. Math., \href{http://dx.doi.org/10.4153/CJM-2015-055-7}{\tt http://dx.doi.org/10.4153/CJM-2015-055-7}.

\bibitem[KV]{KV}
I. Klep, J. Vol\v{c}i\v{c}:
{\it Free loci of matrix pencils and domains of noncommutative rational functions},
to appear in Comment. Math. Helv., \href{http://arxiv.org/abs/1512.02648}{\tt arXiv:1512.02648}.

\bibitem[Kra01]{Kra}
S. G. Krantz:
{\it Function theory of several complex variables},
AMS Chelsea Publishing, Providence, RI, 2001.

\bibitem[MSS15]{MSS}
A. W. Marcus, D. A. Spielman, N. Srivastava:
{\it Interlacing families II: Mixed characteristic polynomials and the Kadison-Singer problem},
Ann. of Math. 182 (2015) 327--350. 

\bibitem[Mar08]{Mar}
M. Marshall:
{\it Positive polynomials and sums of squares},
Mathematical Surveys and Monographs 146, American Mathematical Society, Providence, RI, 2008.

\bibitem[Mir70]{Mir}
C. Miranda:
{\it Partial differential equations of elliptic type},
2nd revised edition (translated from the Italian by Z. C. Motteler), Ergebnisse der Mathematik und ihrer Grenzgebiete Band 2, Springer-Verlag, New York-Berlin, 1970.

\bibitem[McC01]{McC}
S. McCullough:
{\it Factorization of operator-valued polynomials in several non-commuting variables},
Linear Algebra Appl. 326 (2001) 193--203.

\bibitem[MP05]{MP}
S. McCullough, M. Putinar:
{\it Noncommutative sums of squares},
Pacific J. Math. 218 (2005) 167--171. 

\bibitem[Nel]{Nel}
C. S. Nelson:
{\it A Real Nullstellensatz for Matrices of Non-Commutative Polynomials},
preprint \href{http://arxiv.org/abs/1305.0799}{\tt arXiv:1305.0799}.

\bibitem[NT12]{NT}
T. Netzer, A. Thom:
{\it Polynomials with and without determinantal representations},
Linear Algebra Appl. 437 (2012) 1579--1595.

\bibitem[Nir59]{Nir}
L. Nirenberg:
{\it On elliptic partial differential equations},
Ann. Scuola Norm. Sup. Pisa (3) 13 (1959) 115--162. 

\bibitem[PT-D]{PTD}
J. E. Pascoe, R. Tully-Doyle:
{\it Free Pick functions: representations, asymptotic behavior and matrix monotonicity in several noncommuting variables}, 
preprint \href{http://arxiv.org/abs/1309.1791}{\tt arXiv:1309.1791}.

\bibitem[PS76]{PS}
C. Procesi, M. Schacher:
{\it A non-commutative real Nullstellensatz and Hilbert's 17th problem},
Ann. of Math. 104 (1976) 395--406. 

\bibitem[Reu96]{Reu}
C. Reutenauer:
{\it Inversion height in free fields},
Selecta Math. (N.S.) 2 (1996) 93--109.

\bibitem[SIG97]{SIG}
R. E. Skelton, T. Iwasaki, D. E. Grigoriadis:
{\it A unified algebraic approach to linear control design}, 
The Taylor \& Francis Systems and Control Book Series, Taylor \& Francis, Ltd., London, 1998.

\bibitem[Vol]{Vol}
J. Vol\v{c}i\v{c}:
{\it Matrix coefficient realization theory of noncommutative rational functions},
preprint \href{http://arxiv.org/abs/1505.07472}{\tt arXiv:1505.07472}.

\bibitem[WSV12]{WSV}
H. Wolkowicz, R. Saigal, L. Vandenberghe (editors): {\it Handbook of semidefinite programming: theory, algorithms, and applications}, vol. 27, Springer Science \& Business Media, 2012.

\bibitem[WSCP91]{WSCP}
B. F. Wyman, M. K. Sain, G. Conte, A. M. Perdon:
{\it Poles and zeros of matrices of rational functions},
Linear Algebra Appl. 157 (1991) 113--139. 

\end{thebibliography}
\end{document}